\gdef\SetFigFont#1#2#3#4#5{\reset@font\fontsize{#1}{#2pt}\fontfamily{#3}\fontseries{#4}\fontshape{#5}\selectfont}\fi
\numberwithin{equation}{section}
\font\script=rsfs10 at 11pt
\def\H{{\mbox{\script H}\,\,}}
\def\LL{{\mbox{\script L}\,\,}}
\def\II{{\mbox{\script I}\,\,}}
\def\MM{{\mbox{\script M}\,\,}}
\def\R{\mathbb R}
\def\B{\mathcal B}
\def\D{\mathcal D}
\def\Q{\mathcal Q}
\def\N{\mathbb N}
\def\T{\mathcal T}
\def\eps{\varepsilon}
\def\angle#1#2#3{#1\widehat{#2}#3}
\def\uad{u_{\rm adj}}
\def\upqd{u^\delta_{pq}}
\def\clos{\mathrm{clos}\,}
\def\ee{\mathrm{e}}
\def\bz{\bar{z}}
\def\XXint#1#2#3{{\setbox0=\hbox{$#1{#2#3}{\int}$} \vcenter{\vspace{-1pt}\hbox{$#2#3$}}\kern-.5\wd0}}
\def\Xint#1{\mathchoice {\XXint\displaystyle\textstyle{#1}}{\XXint\textstyle\scriptstyle{#1}}{\XXint\scriptstyle\scriptscriptstyle{#1}}{\XXint\scriptscriptstyle\scriptscriptstyle{#1}}\!\int}
\def\intmed{\Xint{-}}
\def\step#1#2{\par\medskip \noindent{\underline{\it Step~#1.}}\emph{ #2}\par}
\newtheorem{theorem}{Theorem}[section]
\newtheorem{lemma}[theorem]{Lemma}
\newtheorem{proposition}[theorem]{Proposition}
\newtheorem{remark}[theorem]{Remark}
\newtheorem{definition}[theorem]{Definition}
\begin{document}

\title[Approximation of orientation-preserving maps]{Smooth approximation of bi-Lipschitz orientation-preserving homeomorphisms}

\author{Sara Daneri}\address{Dipartimento di Matematica, via Ferrata 1, 27100 Pavia (Italy)}\email{sara.daneri@unipv.it}
\author{Aldo Pratelli}\address{Dipartimento di Matematica, via Ferrata 1, 27100 Pavia (Italy)}\email{aldo.pratelli@unipv.it}

\begin{abstract}
We show that a planar bi-Lipschitz orientation-preserving homeomorphism can be approximated in the $W^{1,p}$ norm, together with its inverse, with an orientation-preserving homeomorphism which is piecewise affine or smooth.
\end{abstract}

\maketitle

\section{Introduction}
In this paper we deal with approximations of bi-Lipschitz orientation-preserving homeomorphisms $u:\Omega\subseteq\R^2\longrightarrow\Delta\subseteq\R^2$, where $\Omega$ and $\Delta=u(\Omega)$ are two open bounded subsets of $\R^2$.
 In particular, we show that both $u$ and its inverse can be approximated in the $W^{1,p}$-norm ($p\in[1,+\infty)$) by piecewise affine or smooth orientation-preserving homeomorphisms. Our main theorem is the following.

\begin{theorem}\label{main}
Let $\Omega\subseteq \R^2$ be any bounded open set, and let $u:\Omega\longrightarrow\Delta$ be a bi-Lipschitz orientation-preserving homeomorphism. Then, for any $\bar\eps>0$ and any $1\leq p<\infty$, there exists a bi-Lipschitz orientation-preserving homeomorphism $v:\Omega\longrightarrow \Delta$ such that $u= v$ on $\partial \Omega$,
\begin{equation}\label{approximation}
\|{u-v}\|_{L^{\infty}(\Omega)}+\|{u^{-1}-v^{-1}}\|_{L^{\infty}(\Delta)}+\|{Du-Dv}\|_{L^{p}(\Omega)}+\|{Du^{-1}-Dv^{-1}}\|_{L^{p}(\Delta)} \leq \bar\eps\,,
\end{equation}
and $v$ is either countably piecewise affine or smooth. More precisely, there exist two geometric constants $C_1$ and $C_2$ such that, if $u$ is $L$ bi-Lipschitz, then the countably piecewise affine approximation can be chosen to be $C_1 L^4$ bi-Lipschitz, while the smooth approximation can be chosen to be $C_2 L^{28/3}$ bi-Lipschitz.
\end{theorem}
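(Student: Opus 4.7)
\emph{Overall strategy.} The plan is to proceed in two stages: first construct the countably piecewise affine approximation $v_1$, with bi-Lipschitz constant at most $C_1 L^4$; then obtain the smooth approximation $v_2$ from $v_1$ by a careful mollification, losing at most a further polynomial factor in $L$.

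\emph{Stage one: piecewise affine approximation.} The natural starting point is to fix a triangulation of $\Omega$ and replace $u$ on each triangle $T$ by the unique affine map $\tilde u_T$ interpolating the values of $u$ at the vertices of $T$. The gradient of $\tilde u_T$ is automatically bounded by $L$, and on triangles where $Du$ has small oscillation the Jacobian of $\tilde u_T$ stays close to $\det Du \ge L^{-2}$, giving orientation preservation with bi-Lipschitz constant of order $L^2$. The difficulty is that one needs this \emph{uniformly} on a covering of $\Omega$, and with boundary trace exactly equal to $u|_{\partial\Omega}$, which is merely Lipschitz. The boundary constraint forces a Whitney-type nested refinement in which triangles shrink toward $\partial\Omega$, making the approximation countably (rather than finitely) piecewise affine. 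The additional powers in the exponent $4$ come from the straightening steps that locally replace $u$ by a map whose trace on the polygonal approximation of $\partial\Omega$ is piecewise linear, together with the bi-Lipschitz gluing of the interior and boundary constructions.

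\emph{Stage two: smooth approximation.} Given $v_1$, one mollifies by convolution with a kernel of variable radius $r(x)$ which vanishes as $x$ approaches any edge of the triangulation and any point of $\partial\Omega$. Inside each triangle $v_1$ is already affine, so the convolution does not affect it there; it only smooths the jump of $Dv_1$ across the edges. Since adjacent affine pieces agree on edges and the orientation is preserved across them, for $r(x)$ small enough (compared to the local triangle size and the distance between the images of opposite edges) the mollified map remains injective. The bi-Lipschitz constant is then controlled by the worst distortion of the local averages of $Dv_1$, which in turn is controlled by $L$ with an additional power accounting for the geometry of the edges, yielding the bound $C_2 L^{28/3}$.

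\emph{Main obstacles.} The principal difficulties are (i) showing that the affine interpolation is orientation-preserving on every triangle of a fine enough triangulation simultaneously, which requires a uniform quantitative version of Lebesgue's theorem applied to $Du$ and is what forces the countable, rather than finite, number of pieces; and (ii) matching the prescribed non-affine boundary datum $u|_{\partial\Omega}$ while keeping the bi-Lipschitz constant polynomial in $L$. The latter is the real technical heart of the argument: one must design a Whitney decomposition near $\partial\Omega$ together with an explicit straightening procedure that locally realigns $u$ with a polygonal grid, with each straightening step and each gluing step inflating the bi-Lipschitz constant only by a bounded power of $L$. The compounding of these powers across the interior interpolation, the boundary straightening and (for $v_2$) the edge mollification is what produces the exponents $4$ and $28/3$.
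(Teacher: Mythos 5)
Your overall two-stage strategy (piecewise affine first, then smooth) and some of the intuitions (Lebesgue points of $Du$, a Whitney-type refinement near $\partial\Omega$, the resulting countability) do match the paper. However, there is a genuine gap at the heart of your Stage one.

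You propose to prove that ``the affine interpolation is orientation-preserving on every triangle of a fine enough triangulation simultaneously,'' via ``a uniform quantitative version of Lebesgue's theorem.'' No such statement is available, and the goal itself is unreachable: Lebesgue's differentiation theorem gives only almost everywhere (not uniform) smallness of the oscillation of $Du$, and no matter how small the triangles are, there will remain points where the local oscillation of $Du$ is of order $1$. Near such points the naive affine interpolation can flip orientation and lose injectivity, exactly as the paper's Figure~1 illustrates, and refining the triangulation does not help. The paper does not attempt this; it settles for a right polygon $\Omega_\eps$ of \emph{Lebesgue squares} covering $\Omega$ up to measure $\eps$ (Proposition~\ref{prop:lebappr}), where the interpolation works, and then handles $\Omega\setminus\Omega_\eps$ by an entirely different mechanism. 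That mechanism is the key ingredient your proposal is missing: a quantitative planar bi-Lipschitz extension theorem (Theorem~\ref{teo:bilsquare}, from~\cite{DanPra}), which takes an $L$ bi-Lipschitz piecewise affine map on $\partial\D(0,1)$ and extends it to a $C_3L^4$ bi-Lipschitz piecewise affine map of the whole square. On the non-Lebesgue region one does \emph{not} interpolate $u$; one merely constructs a $72L$ bi-Lipschitz piecewise affine approximation of $u$ on the $1$-dimensional grid (Lemmas~\ref{lem:curves} and~\ref{lem:cross}) and fills the squares with this extension. Any bi-Lipschitz map with the right boundary data suffices there because the area of $\Omega\setminus\Omega_\eps$ is small and $Du$, $Dv$ are uniformly bounded, so the $W^{1,p}$ error is controlled by the area, not by pointwise proximity. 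This ``we need not be close at all on a small-measure set'' observation, together with the extension theorem, is what actually produces the exponent $L^4$; your ``straightening'' heuristic does not identify either.

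Your Stage two is a plausible alternative in spirit, but the sketch as written has problems: a variable-radius mollifier whose radius vanishes at the triangulation edges would leave the kinks of $Dv_1$ unresolved there, so the result would not be smooth (and if the radius does not vanish, preserving injectivity and bi-Lipschitz bounds is exactly the delicate content that has to be proved). The paper sidesteps all of this by invoking Theorem~\ref{MP} from~\cite{MCPra}, which provides a smooth $70L^{7/3}$ bi-Lipschitz diffeomorphism approximating a given $L$ bi-Lipschitz countably piecewise affine homeomorphism; composing with the $C_1L^4$ bound from Stage one gives precisely the exponent $4\cdot\tfrac{7}{3}=\tfrac{28}{3}$, which you state but do not derive.
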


Thanks to a result by Mora-Corral and the second author~\cite{MCPra} (see Theorem~\ref{MP} below), the problem of finding smooth approximations can be actually reduced to find countably piecewise affine ones --i.e. affine on the elements of a locally finite triangulation of $\Omega$, see Definition~\ref{def:triang}.

The fact that $v$ might not be (finitely) piecewise affine but countably piecewise affine is due to the fact that we require $u=v$ on $\partial\Omega$, so it is clearly impossible to find a (finitely) piecewise affine approximation $v$ unless the domain is a polygon and $u$ is piecewise affine on the boundary. In fact, we also prove the following result.

\begin{theorem}\label{mainaffine}
If under the assumptions of Theorem~\ref{main} one has also that $\Omega$ is polygonal and $u$ is piecewise affine on $\partial \Omega$, then there exists a (finitely) piecewise affine approximation $v:\Omega\longrightarrow \Delta$ as in Theorem~\ref{main} which is $C_1 C'(\Omega) L^4$ bi-Lipschitz. 
\end{theorem}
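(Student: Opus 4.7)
The plan is to reduce to Theorem~\ref{main} by trimming a thin strip near $\partial\Omega$. First, apply Theorem~\ref{main} with tolerance $\bar\eps/2$ to obtain a countably piecewise affine bi-Lipschitz homeomorphism $v_0 : \Omega \to \Delta$ with $v_0 = u$ on $\partial\Omega$, affine on the cells of a locally finite triangulation $\T$ of $\Omega$, and with bi-Lipschitz constant $C_1 L^4$. For each $\eta > 0$ let $\mathcal K_\eta$ denote the union of all triangles of $\T$ entirely contained in $\{x \in \Omega : \mathrm{dist}(x, \partial\Omega) > \eta\}$; by local finiteness this is a finite union. For $\eta$ small enough, $\mathcal K_\eta$ can be enlarged by absorbing any bounded components of its complement so that $A := \Omega \setminus \overline{\mathcal K_\eta}$ is a polygonal annulus, with inner boundary a finite polygonal curve $\gamma$. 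On $\overline{\mathcal K_\eta}$ we keep $v_0$, and we replace it on $A$ by a finitely piecewise affine map with the same boundary trace on $\partial\Omega \cup \gamma$.

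On the annulus $A$ both boundary maps are piecewise affine: $u$ on $\partial\Omega$ by hypothesis, and $v_0|_\gamma$ because $\gamma$ is a finite union of edges of $\T$. To build a finitely piecewise affine bi-Lipschitz homeomorphism $w : \overline A \to v_0(\overline A)$ with $w = u$ on $\partial\Omega$ and $w = v_0$ on $\gamma$, we triangulate $A$ by inserting as vertices the breakpoints of $u|_{\partial\Omega}$ and of $v_0|_\gamma$, connecting corresponding pairs across $A$ by segments approximately perpendicular to $\partial\Omega$, and subdividing the resulting quadrilaterals into triangles. Because $\Omega$ is polygonal and $\gamma$ lies close to $\partial\Omega$, this triangulation can be chosen with minimum angle bounded below by a geometric constant depending only on $\Omega$. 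We then define $w$ to be the affine map on each triangle that sends its three vertices to the prescribed images under $u$ or $v_0$. Gluing $w$ on $A$ with $v_0$ on $\mathcal K_\eta$ yields the desired $v$.

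The approximation estimate \eqref{approximation} follows by choosing $\eta$ small enough: since both $v$ and $v_0$ agree outside the thin strip $A$ and both are bi-Lipschitz with controlled constants, their $L^\infty$ and $W^{1,p}$ differences on $A$ (and those of their inverses on $v(A)$) vanish as $|A|\to 0$. The main obstacle is controlling the bi-Lipschitz constant of $w$ on $A$: a careless triangulation could produce triangles of very small angle whose affine images would have unbounded distortion. The key observation is that near $\partial\Omega$ the triangles of $\T$ are small and their edges roughly align with $\partial\Omega$, so the segments of $\gamma$ are short and nearly parallel to the corresponding sides of $\partial\Omega$; connecting the two boundaries by segments of length comparable to $\eta$ then yields triangles whose aspect ratio depends only on the geometry of $\Omega$. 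The affine extensions on these triangles have singular values bounded by $C_1 L^4$ times a shape factor $C'(\Omega)$, giving the stated bi-Lipschitz constant. A final verification that the images of adjacent triangles meet without overlap and that $w$ preserves orientation completes the argument.
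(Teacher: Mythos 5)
Your approach is genuinely different from the paper's, but it has a gap precisely at the hard step. The paper does not trim a strip from a countably piecewise affine approximation; it observes (Remark~\ref{remfin}) that when $\Omega$ is a \emph{right} polygon and $u$ is piecewise affine on $\partial\Omega$ compatibly with an $\bar r$-tiling, the tiling in Proposition~\ref{prop:lipext} can be taken finite, and then reduces the general polygonal case to this one via a finitely piecewise affine bi-Lipschitz change of coordinates $\Phi:\Omega\to\widehat\Omega$ onto a right polygon. The extra factor $C'(\Omega)$ comes entirely from the bi-Lipschitz constant of $\Phi$. The heavy lifting that guarantees injectivity (the crucial difficulty, illustrated by Figure~\ref{fig1}) is always delegated to the bi-Lipschitz extension Theorem~\ref{teo:bilsquare}.

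The gap in your argument is that you assert, without proof, both the shape regularity of your annular triangulation and the injectivity and orientation-preservation of the affine interpolation $w$ on $A$. Neither can be taken for granted. First, the triangulation $\T$ produced by Theorem~\ref{main} (via the construction of Sections~\ref{sect:1}--\ref{sect:2}) consists of triangles obtained from axis-parallel squares; near a general polygonal boundary their edges do \emph{not} align with $\partial\Omega$, so the premise that ``the segments of $\gamma$ are short and nearly parallel to the corresponding sides of $\partial\Omega$'' is unjustified. Moreover, $\T$ is refined near $\partial\Omega$, so the number of breakpoints of $v_0|_\gamma$ can vastly exceed the (fixed) number of breakpoints of $u|_{\partial\Omega}$; there is no canonical ``corresponding pair'' matching, and the quadrilaterals you form can have arbitrarily bad aspect ratio, with no bound depending only on $\Omega$. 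Second, and more fundamentally, even if the triangulation of $A$ had good shape, affine interpolation from prescribed vertex images on $\partial\Omega$ and $\gamma$ is exactly the operation the paper warns can fail to be injective and can flip orientation (Figure~\ref{fig1}); you flag this as ``a final verification'' but it is the core obstruction, and dispatching it would require an argument of the same nature and difficulty as Theorem~\ref{teo:bilsquare} or the Lebesgue-square analysis of Section~\ref{sect:1}. As written, the construction of $w$ and the claimed $C_1 C'(\Omega) L^4$ bi-Lipschitz bound are not established.
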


About the dependence of $C'(\Omega)$ in Theorem~\ref{mainaffine} on the domain $\Omega$, see Remark~\ref{depC'}.\par
The first naive idea coming to one's mind in order to construct a piecewise affine approximation of $u$ could be the following: first, to select an arbitrary locally affine triangulation of $\Omega$ with triangles of sufficiently small diameter; then, to define $v$ as the function which, on every triangle, is the affine interpolation of the values of $u$ on its vertices.
Unfortunately, if on one hand the functions defined in this way provide an approximation of $u$ in $L^{\infty}$, on the other hand they may fail to be homeomorphisms. The problem is due to the fact that, taking arbitrary nondegenerate triangles in $\Omega$ --no matter how small-- then the affine interpolation of $u$ on the vertices of the triangles can be orientation-preserving on some triangles and orientation-reversing on the others (see Figure~\ref{fig1}). This prevents the affine interpolation to be injective since an homeomorphism on a connected domain in $\R^2$ must be either orientation-preserving on every subdomain, or orientation-reversing on every subdomain. An explicit example of a function with such a bad behaviour can be found in~\cite{SerShi}.\par
\begin{figure}[htbp]
\begin{center}
\input{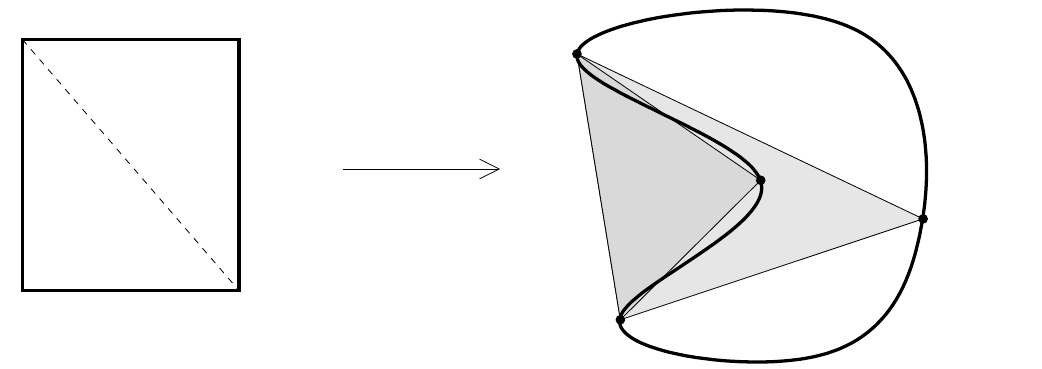tex_t}
\caption{{\small The square $ABCD$ is divided in the triangles $\T$ and $\T'$. The affine interpolation $v$ of $u$ on $ABCD$ is not injective, since $v(\T)\subseteq v(\T')$ ($v(\T)$ and $v(\T')$ are shaded). Moreover, $u$ is orientation-preserving in the square while $v$ is orientation-reversing on $\T$.}}\label{fig1}
\end{center}
\end{figure}
The general problem of finding suitable approximations of homeomorphisms $u:\R^d\supseteq\Omega\longrightarrow u(\Omega)\subseteq\R^d$ with piecewise affine homeomorphisms has a long history. As far as we know, in the simplest non-trivial setting (i.e. $d=2$, approximations in the $L^{\infty}$-norm) the problem was solved by Rad\'{o}~\cite{Rado}. Due to its fundamental importance in geometric topology, the problem of finding piecewise affine homeomorphic approximations in the $L^{\infty}$-norm and dimensions $d>2$ was deeply investigated in the 50s and 60s. In particular, it was solved by Moise~\cite{Moise1} and Bing~\cite{Bing} in the case $d=3$ (see also the survey book~\cite{Moise2}), while for contractible spaces of dimension $d\geq5$ the result follows from theorems of Connell~\cite{Conn}, Bing~\cite{Bing2}, Kirby~\cite{Kirby} and Kirby, Siebenmann and Wall~\cite{Kirbetal} (for a proof see, e.g., Rushing~\cite{Rush} or Luukkainen~\cite{Lukk}). Finally, twenty years later, while studying the class of quasi-conformal varietes, Donaldson and Sullivan~\cite{DonSull} proved that the result is false in dimension 4.\par

Let us now consider the case of Sobolev homeomorphisms $u\in W^{1,p}$ for some $p\in[1,+\infty]$. As pointed out by Ball (see~\cite{Ball,Ball2}, see also Evans~\cite{Evans}), the problem of proving the existence of piecewise affine approximations of Sobolev homeomorphisms arises naturally when one wants to approximate with finite elements the solutions of minimization problems in nonlinear elasticity (e.g. the minima of neohookean functionals, see also~\cite{Ball1}, \cite{BauPhiOw}, \cite{CDeL}, \cite{SivSpec}). In that context, the function $u$ represents the physical deformation of a material with no interpenetration of matter (in particular, $d=2$ as in the present paper, or $d=3$). 

The results available in the literature provide, under increasingly weaker hypotheses on the derivatives of $u$, piecewise affine or smooth approximations of $u$ and its derivatives.
The first results were obtained by Mora-Corral~\cite{MC} (for planar bi-Sobolev mappings that are smooth outside a finite set) and by Bellido and Mora-Corral~\cite{BMC}, in which they prove that if $u\in C^{0,\alpha}$ for some $\alpha\in (0,1]$, then one can find piecewise affine approximations $v$ in $C^{0,\beta}$, where $\beta\in(0,\alpha)$ depends only on $\alpha$.\par
Recently, Iwaniec, Kovalev and Onninen~\cite{IwKovOnn} almost completely solved the approximation problem of planar Sobolev homeomorphisms, proving that whenever $u$ belongs to $W^{1,p}$ for some $1<p<+\infty$, then it can be approximated by smooth diffeomorphisms $v$ in the $W^{1,p}$-norm (improving the previous result for homeomorphisms in $W^{1,2}$ found in~\cite{IKO1}).\par

In all the above-mentioned results, approximations are intended in the $W^{1,p}$ sense, that is, in place of~(\ref{approximation}) one obtains estimates of the form
\begin{equation}\label{oldapp}
\|{u-v}\|_{L^{p}(\Omega)}+\|{Du-Dv}\|_{L^{p}(\Omega)} \leq \bar\eps\,,
\end{equation}
whitout any information on the inverses $u^{-1}$ and $v^{-1}$. Unfortunately, this is still not enough for the applications: in fact, the functionals of nonlinear elasticity usually depend on functions of the Jacobian of $u$ which explode when ${\rm det} (Du)\to 0$ (see for instance~\cite[pag.~3]{Ball2}). The physical meaning of choosing such functionals is that too high compressions or strecthings require high energy. As a consequence of this, two invertible Sobolev functions $u$ and $v$ which are close in the sense of~(\ref{oldapp}) may have very different energies. Instead, if $u$ and $v$ are close in the sense of~(\ref{approximation}), then their energies are also close.\par

These considerations suggest to work in the space of bi-Sobolev homeomorphisms, that is, homeomorphisms $u$ such that both $u$ and $u^{-1}$ belong to $W^{1,p}$. This was also already suggested in the paper by Iwaniec, Kovalev and Onninen (\cite[Question~4.2]{IwKovOnn}). We only mention here that the study of bi-Sobolev homeomorphisms is very active, also in connection with maps of finite distorsion (see for instance~\cite{CsHM,Hencl1,Hencl2,HenclMaly,HenclKosk,HenclKoskMaly,HenclMPS}).\par


The present paper is the first one to take care also of the distance of the inverse maps, leading to a partial result towards the solution of the general problem (hence, we also give a partial positive answer to Question~4.2 of~\cite{IwKovOnn}): in fact, we are able to deal with homeomorphisms which are bi-Sobolev for $p=+\infty$. The techniques adopted in~\cite{BMC} and~\cite{IwKovOnn} are completely different with respect to the ones which will be used throughout this paper. While the proof in~\cite{BMC} is based on a refinement of the supremum norm approximation of Moise~\cite{Moise1} (which, as pointed out by the authors themselves, cannot be extended to deal with the Sobolev case) and the approach of~\cite{IwKovOnn} makes use of the identification $\R^2\simeq \mathbb C$ and involves coordinate-wise p-harmonic functions, our proof is constructive and based on an explicit subdivision of the domain of $u$ depending on the Lebesgue points of $Du$.\par
We conclude giving a bound on the values of the costants $C_1$, $C_2$ and $C_3$ appearing in Theorems~\ref{main}, \ref{mainaffine} and~\ref{teo:bilsquare} (while the constant $C'(\Omega)$ depends on the set $\Omega$, see Remark~\ref{depC'}),
\begin{align*}
C_1= 72^4 C_3\,, && C_2 = 70C_1^{7/3}\,, && C_3 = 636000 \,.
\end{align*}

\section{Scheme of the proof and plan of the paper\label{sect:scheme}}

Our proof is constructive, thus long, but it relies essentially on three known facts: the Lebesgue differentiation Theorem for $L^1$-maps in $\R^d$, the Jordan curve theorem and a planar bi-Lipschitz extension theorem for homeomorphic images of squares proved in~\cite{DanPra}. As already mentioned in the introduction, all our effort will be to get a piecewise affine approximation of $u$, since then the smooth extension readily follows by the following recent result from~\cite{MCPra}.
\begin{theorem}\label{MP}
Let $v:\Omega\longrightarrow\R^2$ be a (countably) piecewise affine homeomorphism, bi-Lipschitz with constant $L$. Then there exists a smooth diffeomorphism $\hat v:\Omega\longrightarrow v(\Omega)$ such that $\hat v \equiv v$ on $\partial \Omega$, $\hat v$ is bi-Lipschitz with constant at most $70L^{7/3}$, and
\[
\|\hat v - v\|_{L^{\infty}(\Omega)} + \| D\hat v - Dv\|_{L^p(\Omega)} + \|\hat v^{-1} - v^{-1}\|_{L^{\infty}(v(\Omega))} + \| D\hat v^{-1} - Dv^{-1}\|_{L^p(v(\Omega))} \leq \eps\,.
\]
\end{theorem}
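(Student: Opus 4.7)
My plan is to construct $\hat v$ by a \emph{localized smoothing} of $v$ in a thin neighborhood of the singular set $\Sigma$ of the countable triangulation (the vertices and the relative interiors of the edges), leaving $v$ unchanged elsewhere. Outside $\Sigma$ the map $v$ is already affine on each triangle, hence smooth, so all the work is confined to a tubular neighborhood of $\Sigma$ of thickness $\delta_i$ around the $i$-th edge. Since the triangulation is only locally finite, I would pick the $\delta_i$'s decaying fast enough that both the total area of the modified region and the accumulated $W^{1,p}$-error are bounded by $\eps$; this is routine bookkeeping once the local constructions are in place.

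The easy part is the \emph{edge} case, away from the vertices. At a point of the relative interior of an edge $e = T_1\cap T_2$, choose coordinates $(s,t)$ with $e\subset\{t=0\}$; continuity of $v$ across $e$ and affineness on each triangle give $v(s,t) = v(s,0) + \max(t,0)\,w_+ + \min(t,0)\,w_-$, with $w_\pm$ the constant normal derivatives on $T_{1,2}$. I would replace $\max(t,0)$ and $\min(t,0)$ by smooth functions $\phi_\pm$ with $\phi_+'+\phi_-'\equiv 1$ on $(-\delta_i,\delta_i)$ and coinciding with $\max,\min$ outside. The second column of the new Jacobian, $\phi_+'(t)w_+ + \phi_-'(t)w_-$, is then a convex combination of $w_+$ and $w_-$, both of which lie in the open half-plane $\{w:\det(\partial_s v,w)>0\}$ by orientation-preservation of $v$. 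Convexity of this half-plane keeps $\det D\hat v$ positive and quantitatively bounded away from zero by the bi-Lipschitz estimate on $v$, at the cost of only a universal multiple of $L$.

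The \emph{vertex} case is the main obstacle, and it is where the peculiar exponent $7/3$ must enter. Near a vertex $P$ finitely many triangles meet; after the edge construction has been carried out along the boundary of a small disk $B_r(P)$, one is left with a smooth bi-Lipschitz homeomorphism prescribed on $\partial B_r(P)$ that must be extended smoothly to the interior. I would invoke the planar bi-Lipschitz extension theorem from~\cite{DanPra} cited in Section~\ref{sect:scheme}: that result extends an $L$-bi-Lipschitz boundary map of a topological square to a bi-Lipschitz homeomorphism of the interior with constant a universal multiple of $L^{7/3}$, and a further mollification inside the extended region produces the smooth vertex model. This step forces the overall bi-Lipschitz constant of $\hat v$ to be at most $70L^{7/3}$, dominating the $O(L)$ loss from the edges. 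Once this local model is available, gluing it with the edge construction is straightforward: $\hat v = v$ outside the modified region, so the $L^\infty$-distances between $\hat v$ and $v$ (and between their inverses, controlled via $\hat v^{-1}-v^{-1} = \hat v^{-1}\circ(v - \hat v)\circ v^{-1}$ up to Lipschitz factors) are $O(\max_i \delta_i)$, while $\|D\hat v - Dv\|_{L^p}$ and the analogous derivative bound for inverses are controlled by the uniform bound $O(L^{7/3})$ on $|D\hat v|+|Dv|$ times the $L^p$-measure of the modified region, both of which we made arbitrarily small.
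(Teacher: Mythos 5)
Theorem~\ref{MP} is not proved in this paper: it is the main result of the companion paper~\cite{MCPra}, and the present paper invokes it as a black box (the sentence immediately preceding the statement says the smooth case ``readily follows by the following recent result from~\cite{MCPra}''). There is therefore no internal proof to compare your attempt against; the paper's ``proof'' of this statement is the citation.

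Evaluated on its own merits, your plan correctly identifies the structure of the problem: away from the $1$-skeleton $v$ is already affine, the edge singularity admits a one-dimensional mollification, and the real difficulty is concentrated at the vertices. Your edge model is sound: writing $v(s,t)=v(s,0)+\max(t,0)\,w_+ + \min(t,0)\,w_-$ and replacing $(\max,\min)$ by a smooth pair $(\phi_+,\phi_-)$ with $\phi_+'+\phi_-'\equiv1$, $\phi_\pm'\geq0$, makes $\partial_t\hat v$ a convex combination of $w_\pm$, and linearity of $\det(\partial_s v,\cdot)$ keeps the Jacobian positive with constants controlled by $L$.

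The vertex step, however, contains a genuine gap. You propose to extend the boundary data on a small disk around a vertex via Theorem~\ref{teo:bilsquare} (the~\cite{DanPra} extension) and then to mollify the extension. This is circular: Theorem~\ref{teo:bilsquare} produces a \emph{piecewise affine} extension, so after applying it you are left with precisely the task you started with, namely smoothing a piecewise affine bi-Lipschitz map in a neighborhood of a vertex singularity, and no smoothing lemma has been supplied. Moreover, the constant in Theorem~\ref{teo:bilsquare} is of order $L^4$, not $L^{7/3}$, so even if the missing mollification step were available your route would not recover the stated bound $70L^{7/3}$; that exponent has to come from a genuinely different explicit vertex construction, which is what~\cite{MCPra} provides. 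Finally, the gluing between the edge and vertex models is not addressed: near a vertex the tubular neighborhoods of the several incident edges overlap, so the edge smoothings cannot be performed independently of the vertex model, and merely shrinking the $\delta_i$ does not resolve the matching of the two constructions along their common interface.
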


A rough idea and scheme of our construction is as follows. 

\paragraph{\textbf{Approximation of $\mathbf{u}$ on Lebesgue squares}}
The first idea is to use the fact that, in a sufficiently small neighborhood of each Lebesgue point $z$ for the differential $Du$, the map $u$ is arbitrarily close, both in $W^{1,p}$ and in $L^{\infty}$, to an affine $L$ bi-Lipschitz map (given by its linearization around the point $z$). The $W^{1,p}$ estimate is simply a restatement of the definition of Lebesgue point of $Du$, while the $L^{\infty}$ estimate is proven in Lemma~\ref{lem:linfty1}.
Indeed we prove that, given a square $\D\subseteq\Omega$ (e.g. a neighborhood of $z$), the more $Du$ is close in $L^{p}(\D)$ to an $L$ bi-Lipschitz matrix $M$ (given e.g. by $Du(z)$), the more $u$ is close in $L^{\infty}(\D)$ to an $L$ bi-Lipschitz affine map $u_M$ with $Du_M=M$. Moreover, since $u$ is bi-Lipschitz, we have that also the inverse of $u$ is close both in $W^{1,p}(u(\D))$ and in $L^{\infty}(u(\D))$ to the inverse of $u_M$.

The main implication of these estimates towards the construction of a piecewise affine bi-Lipschitz map approximating $u$ is the following.
Let us take a square $\D\subseteq\Omega$ as above and let us consider the piecewise affine function $v$ which coincides with $u$ on the vertices of $\D$ and is affine on each of the two triangles obtained dividing $\D$ with a diagonal. If $\|Du-M\|_{L^p(\D)}$ is sufficiently small, then the $L^{\infty}$ estimate implies that $u(\partial\D)$ is uniformly relatively close to the parallelogram of side lengths at least $\mathrm{side}(\D)/ L$ given by $u_M(\partial\D)$.
Hence, since $v=u$ on the vertices of the square and is affine on each side of $\partial\D$, the same uniform estimate holds also for $v$. In particular, the map $v$ is orientation preserving, injective, and approximates $u$ and its inverse as desired. 

Finally, thanks to the fact that the Lebesgue points of $Du$ have full measure in $\Omega$, we fix two orthonormal vectors $\ee_1$, $\ee_2\in\R^2$ and, $\forall\,\eps>0$, we find a set $\Omega_\eps\subset\subset\Omega$ with $\LL(\Omega\setminus\Omega_\eps)\leq\eps$ which is made by a uniform ``tiling'' of squares with sides parallel to $\ee_1$, $\ee_2$ with the following property. On each square $\D$ of the tiling, $Du$ is sufficiently close to an $L$ bi-Lipschitz matrix $M$ (in particular, $M$ will be equal to $Du(z)$ for some Lebesgue point $z\in\D$). Then, by the previous remarks, one can show that the piecewise affine function $v$ obtained interpolating between the values of $u$ on the vertices of the squares is injective and satisfies~\eqref{approximation} on $\Omega_\eps$. Moreover, $v$ is $L+\eps$ bi-Lipschitz.
The squares of the tiling covering $\Omega_\eps$ will be called \emph{Lebesgue squares}, and the set $\Omega_\eps$ \emph{right polygon}, due to its shape --see Figure~\ref{Fig:rightpolygon}.
\begin{figure}[htbp]
\begin{center}
\input{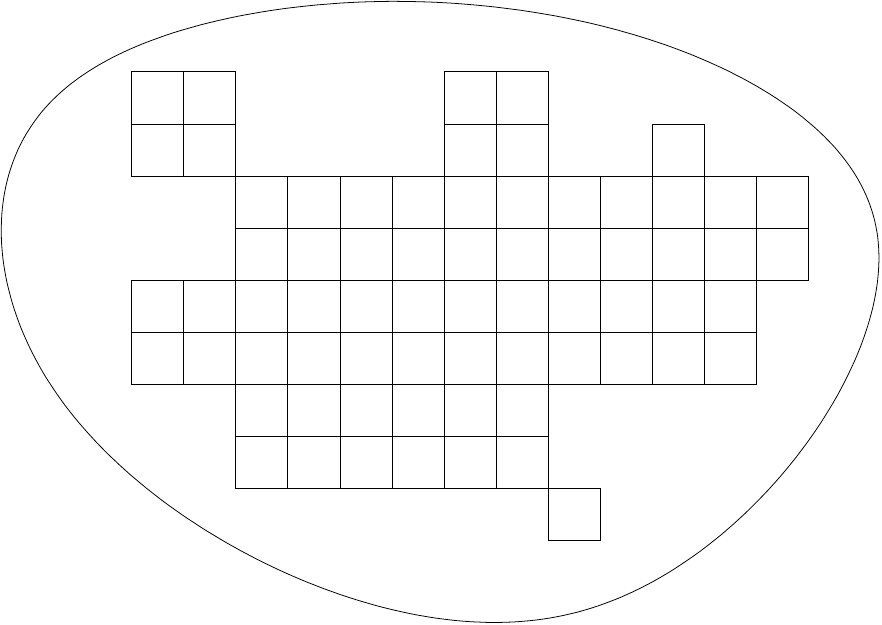tex_t}
\caption{An open set $\Omega$ and a right polygon $\Omega_\eps\subset\subset\Omega$.}\label{Fig:rightpolygon}
\end{center}
\end{figure}

Thus, the first idea of the proof leads to define a piecewise affine approximation of $u$ on a set whose Lebesgue measure is as close as we want to $\LL(\Omega)$. In order to complete the construction, we have to define $v$ in the interior of the set $\Omega\setminus\Omega_\eps$.

\paragraph{\textbf{Countably piecewise affine bi-Lipschitz extension}}

The second idea of our proof is to reduce to the following model case: $\Omega\setminus\Omega_\eps$ is a square of Lebesgue measure at most $\eps$ and $u_{|_{\partial (\Omega\setminus\Omega_\eps)}}$ is a piecewise affine function. In particular, by the previous construction, $v=u$ on $\partial\Omega_\eps$. In this case, an approximating function $v$ is provided by the following bi-Lipschitz extension theorem, proved in~\cite{DanPra}.

\begin{theorem}[\cite{DanPra}]\label{teo:bilsquare}
 There exists a geometric constant $C_3$ such that every $L$ bi-Lipschitz piecewise affine map $u:\partial\D(0,1)\longrightarrow \R^2$ defined on the boundary of the unit square admits a $C_3L^4$ bi-Lipschitz piecewise affine extension $v:\D(0,1)\longrightarrow\Gamma$, where $\Gamma$ is the bounded closed set such that $\partial\Gamma=u(\partial\D(0,1))$.
\end{theorem}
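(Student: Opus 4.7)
Since $u$ is piecewise affine and $L$ bi-Lipschitz on $\partial\D(0,1)$, its image is a simple closed polygon $\partial\Gamma$ with finitely many vertices, and by the Jordan curve theorem $\Gamma$ is a closed topological disk. The plan is to build $v$ constructively by triangulating $\D(0,1)$ compatibly with a triangulation of $\Gamma$, and to carefully track every bi-Lipschitz constant that appears, using the given data that $u$ is $L$ bi-Lipschitz to bound each contribution by a small power of $L$.

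My main strategy is to pick a well-chosen interior point $p^\ast\in\Gamma$, decompose $\Gamma$ into finitely many star-shaped sub-regions $\Gamma_1,\ldots,\Gamma_k$ with respect to $p^\ast$, and correspondingly decompose $\D(0,1)$ into triangular sectors $Q_1,\ldots,Q_k$ emanating from the origin, in such a way that $u$ sends $\partial Q_i \cap \partial\D(0,1)$ onto $\partial\Gamma_i \cap \partial\Gamma$. On each sector $Q_i$, I would define $v$ by the radial interpolation sending $0 \mapsto p^\ast$ and interpolating linearly along each radius between $p^\ast$ and the boundary value $u(x)$. Since $u$ is affine on every sub-arc of $\partial\D(0,1)\cap Q_i$, this interpolation is automatically affine on each triangle obtained by joining such an arc to $p^\ast$, so the resulting $v$ is piecewise affine and agrees with $u$ on $\partial\D(0,1)$. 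Before doing this, I would first normalize the data by inserting extra breakpoints along $\partial\D(0,1)$ so that all affine pieces have lengths comparable up to a factor depending only on $L$, and so that the four corners of the unit square appear as breakpoints; such a refinement does not change the map $u$ itself and is essentially free.

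The main obstacle, and the source of the $L^4$ exponent, is the quantitative bi-Lipschitz control of the radial interpolation. Its distortion is governed by two quantities: the ratio between the maximum and minimum distances from $p^\ast$ to $\partial\Gamma_i$, and the angular distortion of $u$ along $\partial\D(0,1)\cap Q_i$. To keep both under control, the point $p^\ast$ and the decomposition $\{\Gamma_i\}$ must be chosen with care: $p^\ast$ should be close to an inradius-type center of $\Gamma$, and the cuts producing $\Gamma_i$ should be short segments crossing the ``narrowings'' of $\Gamma$, ensuring that each $\Gamma_i$ has aspect ratio bounded by a power of $L$. Since $u$ is $L$ bi-Lipschitz, each such geometric quantity can be bounded in terms of $L$, and the $L^4$ exponent should appear as the product of the elementary contributions: the distortion of the decomposition of $\Gamma$ into star-shaped pieces, the distortion of the radial interpolation on each sector, and the distortion incurred in gluing across the cuts. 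Careful bookkeeping of these contributions, together with estimates that are robust under the preliminary normalization, should yield the stated $C_3 L^4$ bi-Lipschitz bound with $C_3$ a purely geometric constant.
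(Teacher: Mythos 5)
The paper does not prove Theorem~\ref{teo:bilsquare}; it cites it from~\cite{DanPra}, so there is no in-paper argument to compare against. Judged on its own, your proposal has a genuine gap at its core.

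Your construction sends $0\mapsto p^\ast$ and defines $v$ by radial interpolation along rays from the origin in the domain, which in the image means filling $\Gamma$ by the triangles spanned by $p^\ast$ and the affine pieces of $\partial\Gamma$. For this $v$ to be injective, these triangles must have pairwise disjoint interiors and cover $\Gamma$, which is precisely the condition that $\Gamma$ is star-shaped with respect to $p^\ast$. But the bi-Lipschitz image of a square is in general \emph{not} star-shaped with respect to any interior point: already an L-shaped or staircase-shaped polygon (which arises as an $L$ bi-Lipschitz image of a square for modest $L$) has no point from which the whole boundary is visible, and for larger $L$ one can get long serpentine regions that are very far from star-shaped. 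Your proposed fix --- cutting $\Gamma$ along short segments across its ``narrowings'' --- does not close the gap, because either (i) you still use a single center $p^\ast$, in which case requiring every piece $\Gamma_i$ to be star-shaped from that same $p^\ast$ forces $\Gamma$ to be star-shaped from $p^\ast$, contradicting the need for the cuts; or (ii) you use a separate center $p^\ast_i$ for each $\Gamma_i$, in which case the corresponding domain pieces cannot all be radial sectors emanating from the origin, and in any case adjacent interpolations will disagree on the shared interior edge of the domain (each sends it to a different segment), so the pieces do not glue to a continuous map. There is also no mechanism in your sketch that makes the cuts in $\Gamma$ correspond to the straight sector boundaries of the square in a bi-Lipschitz-controlled way.

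So the essential missing idea is a decomposition of \emph{both} the square and $\Gamma$ into corresponding pieces that reflects the actual geometry of $\Gamma$ (its narrowings and elongated parts), together with a gluing scheme along the matching cuts, rather than a fixed fan of sectors from the center. This is exactly the hard part of the extension problem and what accounts for the length and intricacy of the construction in~\cite{DanPra}. The $L^4$ exponent there comes out of that geometric decomposition, not from a direct radial-interpolation estimate; the heuristics you give for where each power of $L$ should come from are plausible in spirit but cannot be carried out within the star-shaped framework you set up.
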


In particular, it is shown in~\cite{DanPra} that one can take $C_3=636000$. We recall that an analogous result had already been proved by Tukia in~\cite{Tukia}. However, explicit estimates of the Lipschitz constant of the extension $v$ were not provided.

It is then easy to verify that, provided $\eps$ is chosen sufficiently small at the beginning, such an extension of $u_{|_{\partial (\Omega\setminus\Omega_\eps)}}$ together with the already defined piecewise affine interpolation of $u$ on the Lebesgue squares, satisfies the assumptions of Theorem~\ref{main}.
Indeed, by definition, $v$ is injective on the whole $\Omega$. Moreover, we know by the previous construction that it satisfies~\eqref{approximation} on $\Omega_\eps$. And on the other hand, on $\Omega\setminus\Omega_\eps$, $|Du|$ and $|Dv|$ are bounded by the two Lipschitz constants $L$ and $CL^4$ (together with their inverses) on a set of small area and then the $W^{1,p}$ estimates in~\eqref{approximation} follow. Finally, since $\Omega\setminus\Omega_\eps$ and $u(\Omega\setminus\Omega_\eps)$ have small Lebesgue measure, $v$ and $v^{-1}$ are also close to $u$ and $u^{-1}$ in $L^{\infty}$. 

\begin{figure}[htbp]
\begin{center}
\input{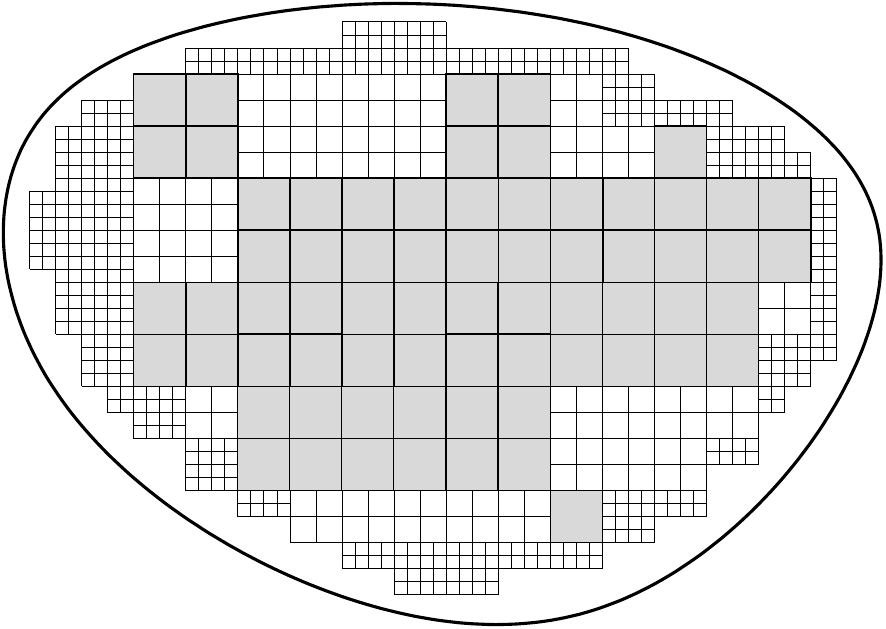tex_t}
\caption{(A part of) the countable tiling of $\Omega\setminus \Omega_\eps$ --the shaded region is $\Omega_\eps$.}\label{Fig:poltil}
\end{center}
\end{figure}

In order to reduce to this model case, we perform the following steps:

\textbf{1. }We cover $\Omega\setminus\Omega_\eps$ with a countable (locally finite in $\Omega$) ``tiling'' of small squares whose sides are parallel to $\ee_1$ and $\ee_2$ (see Figure~\ref{Fig:poltil}).

\textbf{2. }On the $1$-dimensional grid $\Q$ given by the boundaries of the squares of the tiling we define the piecewise affine approximation $v$ in such a way that $v(\Q)\subseteq\Delta$ and $v$ is $72L$ bi-Lipschitz.

\textbf{3. } We ``fill'' the squares of the tiling extending $v_{|_Q}$ by means of Theorem~\ref{teo:bilsquare}, thus getting a globally $C_3(72L)^4$ bi-Lipschitz function on $\Omega\setminus\Omega_\eps$.

The fact that the Lipschitz constant of $v$ on $\Omega_\eps$ depends only on the Lipschitz constant of $u$ will tell us that, as in the model case, the $W^{1,p}$ and $L^{\infty}$ norms of $u-v$ and $u^{-1}-v^{-1}$ can be made as small as we want --provided we choose $\eps$ sufficiently small at the beginning. Thus we end the proof of Theorem~\ref{main}.

Let us also give a very rough idea of how the proofs of Steps~{\bf 1}, {\bf 2} and~{\bf 3} works. While Step~{\bf 1} is a simple geometric construction, Step~{\bf 2} essentially consists in approximating $u$ on the grid $\Q$ with a piecewise affine function. This will be possible thanks to Lemma~\ref{lem:curves}, which tells that it is possible to approximate $u$ on the segments, and to Lemma~\ref{lem:cross}, which takes care of the ``crosses''. Finally, Proposition~\ref{prop:lipext} in Section~\ref{sect:2} concludes the argument of Steps~{\bf 2} and~{\bf 3}. The essential idea there is that, since on the ``non-Lebesgue squares'' the behaviour of $u$ is wilder, one cannot simply take $v$ equal to the affine interpolation of $u$ on the vertices. Indeed, as already pointed out in the introduction, this could easily give a non-injective function. However, since the total area of the non-Lebesgue squares is small, \emph{any} bi-Lipschitz function which preserves the boundary values approximates $u$ as in~(\ref{approximation}). This is why we use the extension of $v_{|Q}$ given by Theorem~\ref{teo:bilsquare} in Step~{\bf 3}.

\subsection*{Plan of the paper}

Section~\ref{sect:0} contains the main notation and some preliminary definitions.

Section~\ref{sect:1} is devoted to the construction of $v$ on a large right polygon made of ``Lebesgue squares''.

In Section~\ref{sect:2} we complete the proof of Theorem~\ref{main} using the reduction argument outlined above and the bi-Lipschitz extension Theorem~\ref{teo:bilsquare}.

Finally, in Section~\ref{sect:3}, we adapt the proof of Theorem~\ref{main} to provide a (finitely) piecewise affine approximation of $u$ whenever $\Omega$ is polygonal and $u_{|_{\partial\Omega}}$ is piecewise affine, thus proving Theorem~\ref{mainaffine}.

\section{Preliminary Notation and Definitions\label{sect:0}}

In this section we give some preliminary definitions and fix some useful notation which will be used throughout the paper.

First we recall the definition of orientation-preserving (resp. reversing) homeomorphism.

\begin{definition}[Orientation-preserving (reversing) homeomorphism]
We say that an homeomorphism $u:\Omega\longrightarrow u(\Omega)\subseteq\R^2$ is orientation-preserving (reversing) if whenever a simple closed curve $[0,1]\ni t\mapsto \gamma(t)\in \Omega$ is parameterized clockwise, then $[0,1]\ni t\mapsto u(\gamma(t))\in u(\Omega)$ is parameterized clockwise (resp. anti-clockwise).
\end{definition}
It is well known that if $\Omega$ is connected, then any homeomorphism $u:\Omega\longrightarrow u(\Omega)\subseteq\R^2$ is either orientation-preserving or orientation-reversing.

Next, we define the class of functions in which we look for approximations of $u$. To this aim we recall the definitions of \emph{(finite) triangulation} of a polygon and of \emph{locally finite triangulation} of an open set $\Omega\subseteq\R^2$.

\begin{definition}[(Finite) triangulation]\label{def:triang}
A \emph{(finite) triangulation} of a polygon $\Omega'\subseteq\R^2$ is a finite collection of closed triangles $\{T_i\}_{i=1}^N$ whose union is equal to $ \mathrm{clos}\,\Omega'$ and, for all $i\neq j$,
\begin{equation}\label{eq:titj}
 T_i\cap T_j\quad\text{is either empty, or a common vertex, or a common side of $T_i$ and $T_j$}\, .
\end{equation}
\end{definition}

\begin{definition}[Locally finite triangulation]
Let $\Omega\subseteq\R^2$ be a bounded open set. A \emph{locally finite triangulation} of $\Omega$ is a locally finite collection of closed triangles $\{T_{i}\}_{i\in\N}$ such that $\Omega\subseteq \bigcup_{i\in\N} T_i\subseteq\mathrm{clos}\, \Omega$ and~\eqref{eq:titj} holds.
\end{definition}

We notice that, unless $\Omega$ is a polygon, the number of elements of a triangulation cannot be finite.

\begin{definition}[Piecewise affine and countably piecewise affine function]
A function $v:\Omega\longrightarrow\R^2$ is \emph{countably piecewise affine} if $v_{|_T}$ is affine on every triangle $T$ of a suitable locally finite triangulation of $\Omega$. If $\Omega$ is a polygon and the triangulation is finite, then we say that $v$ is \emph{(finitely) piecewise affine}.
\end{definition}

In order to build the triangulation on which the function $v$ of Theorem~\ref{main} is countably piecewise affine, we will use, on a subset of $\Omega$ of Lebesgue measure as close as we want to $\LL(\Omega)$, uniform triangulations into right triangles. The union of such triangles will be called a right polygon, according to the following definition. From now on, $\ee_1,\ee_2$ will be two fixed orthonormal vectors in $\R^2$.

\begin{definition}[Right polygon and $r$-piecewise affine function]\label{lastdef}
An open bounded set $\Omega'\subset\R^2$ is called a \emph{right polygon of side-length $r$} (or simply an \emph{$r$-polygon}) if it is a finite union of closed polygons whose sides are all parallel to $\ee_1$, $\ee_2$, and have lengths which are integer multiples of $r>0$. Let now $\Omega'$ be an $r$-polygon, and consider a bi-Lipschitz function $u:\Omega'\to\R^2$. For every side $\Gamma\subseteq \partial\Omega'$, write $\Gamma=\cup_{i=1}^N \Gamma_i$ where the $\Gamma_i$'s are essentially disjoint segments of length $r$. We say that $u$ is \emph{$r$-piecewise affine on $\partial\Omega'$} if for any such segment $\Gamma$ and for any $i$, the function $u$ is affine on $\Gamma_i$.
\end{definition}

Points in $\Omega$ will be denoted by $z\in\R^2$ or by $(x,y)\in\R\times\R$, with $z=x\ee_1+y\ee_2$.
We denote with $\B(z,r)$ the ball of center $z$ and radius $r$ and with $\D(z,r)$ the square of center $z$, side length $r$ and sides parallel to $\ee_1$, $\ee_2$.
Moreover, the generic square of a collection of squares $\{\D(z_\alpha,r_\alpha)\}_{\alpha\in\N}$ will be also sometimes denoted simply by $\D_\alpha$. Instead of working directly with triangulations, it will be convenient, in order to apply our method, to subdivide $\Omega$ into a countable and locally finite family of squares called tiling.

\begin{definition}[Tiling]
Given an open, bounded set $\Omega$, a \emph{tiling of $\Omega$} is a locally finite (in $\Omega$) collection of closed squares $\{\D_{\alpha}(z_\alpha, r_\alpha)\}_{\alpha\in\N}$ whose union is contained between $\Omega$ and $\mathrm{clos}\,\Omega$ and such that, $\forall\alpha\neq\beta\in\N$, $\D_\alpha \cap \D_\beta$ is either empty, or a common vertex of $\D_\alpha$ and $\D_\beta$, or a side of one of the two. Two squares of a tiling are said to be \emph{adjacent} if their intersection is nonempty.
\end{definition}

Notice that a tiling of $\Omega$ can be either finite or countable and in particular it is surely countable if $\Omega$ is not a right polygon.\par

It will be often useful to regard a given tiling of $\Omega$ as the union of the finite tiling corresponding to a right polygon $\Omega'\subset\subset \Omega$ and a countable tiling of $\Omega\setminus\Omega'$, locally finite in $\Omega$. Since these kinds of ``sub-tilings'' will be frequently used in the paper, we define them separately.

\begin{definition}[r-Tiling of a right polygon and tiling of $(\Omega,\Omega')$]\label{def:tiling}
Given an $r$-polygon $\Omega'$, the \emph{$r$-tiling of $\Omega'$} is the (unique) finite collection of closed squares $\{\D(z_\alpha,r)\}_{\alpha\in \II(r)}$ whose union is equal to $\mathrm{clos}\,\Omega'$ and, $\forall\,\alpha\neq\beta\in\II(r)$, $\D_\alpha\cap\D_\beta$ is either empty, or a common vertex, or a common side of $\D_\alpha$ and $\D_\beta$. Given a bounded, open set $\Omega$ and an $r$-polygon $\Omega'\subset\subset \Omega$, a \emph{tiling of $(\Omega,\Omega')$} is a tiling of $\Omega$ whose restriction to $\Omega'$ is the $r$-tiling of $\Omega'$.
\end{definition}

The $1$-dimensional skeleton of a tiling will be called grid, according to the following definition.

\begin{definition}[Grid]\label{def:grid}
Let $\{\D_{\alpha}\}_{\alpha\in\N}$ be a tiling of $\Omega$. We call \emph{grid of the tiling} the $1$-dimensional set given by the union of the boundaries of the squares of the tiling. Each side (resp. vertex) of the squares of a tiling will be called \emph{side} (resp. \emph{vertex}) of the grid.
\end{definition}

A possible definition of a piecewise affine approximation of $u$ on a given $r$-polygon, which will be used in Section~\ref{sect:1}, is the following.

\begin{definition}[$(\Omega',r)$-interpolation of $u$]\label{def:uinterp} 
Let $\Omega'\subset\subset\Omega$ and $\{\D_\alpha\}_{\alpha\in\II(r)}$ be an $r$-right polygon and its $r$-tiling.
We call \emph{$(\Omega',r)$-interpolation of $u$} the piecewise affine function $v:\Omega'\longrightarrow v(\Omega')\subseteq\R^2$ which coincides with $u$ on the vertices of the $r$-tiling and, for each $\alpha\in \II(r)$, is affine on the two right triangles forming $\D_\alpha$ and having as common hypothenuse the north-east$/$south-west diagonal of $\D_\alpha$.
\end{definition}

We conclude this section with a table collecting the main notation used in this paper.
{\small\begin{flalign*}
\begin{array}{ll}
\Omega\subseteq\R^2&\hbox{a given open bounded set}\,,\\
u:\Omega\longrightarrow\Delta &\hbox{a given $L$ bi-Lipschitz function}\,,\\
\MM(2\times2) & \hbox{two by two real matrices}\,,\\
|M| & \sup\big\{\big|Mv\big|:\,|v|=1\big\}\,,\\
\MM(2\times2;L) & \big\{M\in\MM(2\times2):\mathrm{Det} \,M>0,\\
& \qquad\,|M|\leq L, |M^{-1}|\leq L\big\}\,,\\
\ee_1,\,\ee_2 &\hbox{two fixed positively oriented}\\
& \qquad\hbox{orthonormal vectors in $\R^2$}\,,\\
\B(z,r) & \hbox{ball with center $z$ and radius $r$}\,,
\end{array}
&&
\begin{array}{ll}
\D(z,r) & \hbox{square with center $z$, side length $r$}\\
&\qquad \hbox{and sides parallel to $\ee_1$, $\ee_2$}\,,\\
\LL & \hbox{Lebesgue measure on $\R^2$}\,,\\
\H^1 & \hbox{$1$-dimensional Hausdorff measure}\,,\\
\mathrm{int}\, A & \hbox{interior of a set $A\subseteq\R^2$}\,,\\
\clos\,A & \hbox{closure of $A$}\,,\\
\partial A & \hbox{boundary of $A\subseteq \R^2$}\,,\\
\Omega'\subset\subset\Omega & \mathrm{clos}\,{\Omega'}\subseteq\Omega\,,\\
d(A,B) & \inf\{|z-w|:\,z\in A,\,w\in B\}\,.
\end{array}
\end{flalign*}}

\section{Approximation on the ``Lebesgue squares''\label{sect:1}}

The aim of this section is to prove the following 

\begin{proposition}\label{prop:lebappr}
For every $\eps>0$ there exists a right polygon $\Omega_\eps\subset\subset\Omega$ of side length $r$ such that the $(\Omega_\eps,r)$-interpolation $v:\Omega_\eps\longrightarrow v(\Omega_\eps)\subseteq\R^2$ is $L+\eps$ bi-Lipschitz and satisfies 
\begin{align}
&\Delta_\eps:=v(\Omega_\eps)\subset\subset\Delta\,,\label{luino}\\
&\|v-u\|_{L^{\infty}(\Omega_\eps)}+\|v^{-1}-u^{-1}\|_{L^{\infty}(\Delta_\eps)}+\|Du-Dv\|_{L^p(\Omega_\eps)}+\|Du^{-1}-Dv^{-1}\|_{L^p(\Delta_\eps)}\leq \eps \,, \label{eq:lebappr3}\\
&\LL(\Omega\setminus\Omega_\eps)\leq\eps\,, \quad \LL(\Delta\setminus\Delta_\eps)\leq\eps\,, \quad d(\Omega_\eps,\R^2\setminus\Omega)\geq2r\,,\label{luiue}\\
&\|v-u\|_{L^{\infty}(\Omega_\eps)}\leq \frac{\sqrt{2} r}{6L^3}\,.\label{luiro}
\end{align}
\end{proposition}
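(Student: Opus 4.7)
The plan is to build $\Omega_\eps$ as a union of axis-parallel closed squares of a common small side length $r$, chosen so that on each such square the derivative $Du$ is close in $L^p$-average to some constant matrix $M\in\MM(2\times 2;L)$. On such a square the $(\Omega_\eps,r)$-interpolation $v$ is then a tiny perturbation of the affine map $u_M$, and inherits both the bi-Lipschitz bound $L+\eps$ and the approximation properties in~(\ref{eq:lebappr3})--(\ref{luiro}).

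Concretely, I would fix a lattice of squares $\D_\alpha=\D(z_\alpha,r)$ with sides parallel to $\ee_1,\ee_2$ and call $\D_\alpha$ a \emph{Lebesgue square} if $d(\D_\alpha,\partial\Omega)\ge 2r$ and there is a Lebesgue point $z_\alpha^\ast\in\D_\alpha$ of $Du$ with $M_\alpha:=Du(z_\alpha^\ast)\in\MM(2\times 2;L)$ and
\[
\intmed_{\D_\alpha}|Du-M_\alpha|^p\,d\LL<\delta\,,
\]
where $\delta=\delta(\eps,L,p)$ is a small parameter to be chosen. Since $u$ is $L$ bi-Lipschitz, a.e.\ $z\in\Omega$ is a Lebesgue point of $Du$ with $Du(z)\in\MM(2\times 2;L)$; Lebesgue's differentiation theorem together with a standard covering argument then guarantees that for $r$ small enough the union $\Omega_\eps$ of Lebesgue squares is a right polygon satisfying $\LL(\Omega\setminus\Omega_\eps)\le\eps$. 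Lemma~\ref{lem:linfty1} furnishes on each $\D_\alpha$ a quantitative bound $\|u-u_{M_\alpha}\|_{L^\infty(\D_\alpha)}\le\omega(\delta)\,r$ with $\omega(\delta)\to 0$ as $\delta\to 0$; since $v$ agrees with $u$ at the four vertices of $\D_\alpha$ and is affine on each of the two sub-triangles, the same bound propagates to $\|v-u_{M_\alpha}\|_{L^\infty(\D_\alpha)}$, and in particular~(\ref{luiro}) holds provided $\delta$ is chosen small enough compared to $1/L^3$.

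The main obstacle is to upgrade these square-by-square estimates to the global conclusion that $v$ is an $(L+\eps)$ bi-Lipschitz orientation-preserving homeomorphism of $\Omega_\eps$ onto $\Delta_\eps:=v(\Omega_\eps)$. Locally this is easy: on each triangle $Dv$ is constant and expressible as a difference quotient of $u$ at three points that lie within distance $\omega(\delta)\,r$ of $u_{M_\alpha}$, so for $\delta$ small one has $Dv\in\MM(2\times 2;L+\eps)$. For global injectivity the key observation is that $v(\partial\D_\alpha)$ is a simple closed quadrilateral Hausdorff-close to the genuine parallelogram $u_{M_\alpha}(\partial\D_\alpha)$, whose shortest side has length at least $r/L$; hence, provided $\omega(\delta)\ll 1/L$, the Jordan curve theorem together with a degree computation shows that $v$ is an orientation-preserving homeomorphism on each $\D_\alpha$. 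Along any side shared by two adjacent squares $v$ is the \emph{same} linear interpolation between the two common $u$-values, so the image quadrilaterals match without overlap; a short topological argument using connectedness of $\Omega_\eps$ then produces the desired global homeomorphism.

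The remaining estimates follow by summing over $\alpha$. The $W^{1,p}$-bound $\|Du-Dv\|_{L^p(\Omega_\eps)}\le\eps$ comes from the triangle inequality $\|Du-Dv\|_{L^p(\D_\alpha)}\le\|Du-M_\alpha\|_{L^p(\D_\alpha)}+\|M_\alpha-Dv\|_{L^p(\D_\alpha)}$, with both terms controlled by the Lebesgue-square inequality. The analogous bound for the inverse gradients uses $Dv^{-1}\!\circ\! v=(Dv)^{-1}$ and the uniform $(L+\eps)$ bi-Lipschitz constant together with a change of variables onto $\Delta_\eps$, and likewise for $u^{-1}$. The $L^\infty$-closeness of the inverses on $\Delta_\eps$ is immediate from the already established bound on $v-u$ by composing with the $L$-bi-Lipschitz inverses of $u$ and $v$. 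Finally, $d(\Omega_\eps,\partial\Omega)\ge 2r$ together with $\|v-u\|_\infty\ll r$ yields $\Delta_\eps\subset\subset\Delta$, and $\LL(\Delta\setminus\Delta_\eps)\le L^2\LL(\Omega\setminus\Omega_\eps)$ (up to a boundary-perimeter term absorbed by the $L^\infty$-closeness) delivers the remaining inequality in~(\ref{luiue}). Throughout, $r$ and $\delta$ are chosen small enough in terms of $\eps$ and $L$ that every individual estimate comes out below $\eps$.
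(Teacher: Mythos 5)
Your overall strategy matches the paper's: cover a large right polygon by axis-parallel squares on which $Du$ is close in average to a matrix $M_\alpha\in\MM(2\times2;L)$, invoke Lemma~\ref{lem:linfty1} to get an $L^\infty$-closeness to the affine map $u_{M_\alpha}$, and deduce the bi-Lipschitz and $W^{1,p}$ estimates from there. However, there is one crucial technical device in the paper that you have dropped, and this creates a genuine gap in the argument.

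You impose the Lebesgue-square condition $\intmed_{\D_\alpha}|Du-M_\alpha|^p<\delta$ on $\D_\alpha$ itself. The paper instead imposes it on the \emph{tripled} square $\D(z_\alpha,3r)$ (see~\eqref{1buonibis} of Lemma~\ref{lem:quadbuoni}). This is not a cosmetic choice: it guarantees that $v$ is uniformly close (in $L^\infty$, and also at the level of $Dv$) to the \emph{single} affine map $u_\alpha=u_{M_\alpha}$ on the whole $3\times 3$ block of squares around $\D_\alpha$, which contains $\D_\alpha$ together with all of its neighbours. That uniform one-affine-map comparison is precisely what makes injectivity of $v$ on $\D(z_\alpha,3r)\cap\Omega_\eps$ (hence across adjacent squares) immediate, and it also yields the lower bi-Lipschitz bound for pairs of nearby points in different squares. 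In your version, adjacent squares $\D_\alpha$ and $\D_\beta$ carry two \emph{a priori} unrelated matrices $M_\alpha\ne M_\beta$, and your $L^\infty$ control on $u-u_{M_\alpha}$ is only available on $\D_\alpha$, not across the shared side. Your proposed remedy — Jordan curve theorem, a degree computation on each $\D_\alpha$, and then a ``short topological argument using connectedness'' — addresses local injectivity on each square and is at best a sketch for the global statement. In particular it does not deal with the critical case of two points in distinct (adjacent) squares lying within distance $O(\omega(\delta)\,r)$ of the shared side, where the raw $L^\infty$ bound $\|v-u\|_\infty\le\omega(\delta)\,r$ does not rule out $v(z)=v(z')$; nor does it quantify the lower Lipschitz bound for such pairs. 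The paper's Step~V resolves exactly this by the dichotomy $z'\in\D(z_\alpha,3r)$ (use the single-affine-map comparison) versus $z'\notin\D(z_\alpha,3r)$ (then $|z-z'|\ge r$ and the $L^\infty$ bound suffices). You should either adopt the tripled-square condition in your definition of Lebesgue square, or supply a genuinely complete topological injectivity proof that also produces the quantitative lower bi-Lipschitz estimate — the latter is considerably more work than the one-sentence claim in your proposal.

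Two smaller remarks. Using $|Du-M_\alpha|^p$ rather than $|Du-M_\alpha|$ in the Lebesgue condition is harmless (Jensen reduces it to the $L^1$ version needed for Lemma~\ref{lem:linfty1}), but the natural choice given Lebesgue's differentiation theorem is the $L^1$ average. Also, your bound $\LL(\Delta\setminus\Delta_\eps)\le L^2\LL(\Omega\setminus\Omega_\eps)$ needs the injectivity of $v$ already in hand to even define $\Delta_\eps$ as a Jordan domain; in the paper this is dispatched directly via the bi-Lipschitz property of $u$ and the $L^\infty$ estimate from Lemma~\ref{lem:linfty1}.
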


The reason why the piecewise affine interpolation of $u$ will be injective on $\Omega_\eps$ is that, for each square $\D_\alpha$ of the $r$-tiling of $\Omega_\eps$, the function $u$ will be uniformly close to an affine $L$ bi-Lipschitz function on the nine squares around $\D_\alpha$. The linear part of each of these affine functions will be the differential of $u$ at some Lebesgue point for $Du$ inside $\D_\alpha$. For this reason, the squares of such $r$ -tiling will be called ``Lebesgue squares''.

The plan of this section is the following. Section~\ref{subsect:11} contains Lemma~\ref{lem:linfty1}, which is the main ingredient in the proof of Proposition~\ref{prop:lebappr}. Indeed, Lemma~\ref{lem:linfty1} says that, when on a square $Du$ is close in average to an $L$ bi-Lipschitz matrix $M$, then $u$ is close in $L^{\infty}$ to an affine function $u_M$ with $Du_M=M$. 
Then, in Section~\ref{subsect:12}, we will determine $\Omega_\eps$ as a suitable union of squares of an $r$-tiling on which Lemma~\ref{lem:linfty1} holds and provides a sufficiently strong $L^{\infty}$ estimate. Finally, in Section~\ref{subs:apprbuoni} we show that the $(\Omega_\eps,r)$-interpolation of $u$ satisfies the required properties. 

\subsection{{An ${L^{\infty}}$ Lemma}}\label{subsect:11}

We are now ready to begin the proof of Proposition~\ref{prop:lebappr}, starting from the following fundamental lemma. Here and in the following, by $\MM(2\times 2;L)$ we denote the set of the two by two invertible matrices such that the affine map $z \mapsto M z$ is $L$ bi-Lipschitz. Moreover, $\Omega$ and $u$ will always be a set and a function as in the assumptions of Theorem~\ref{main}.

\begin{lemma}\label{lem:linfty1}
For any $\eta>0$ there exists $\delta=\delta(\eta)>0$ such that, if $\bz\in\Omega$, $M\in \MM(2\times 2; L)$ and $\rho>0$ are so that $\D(\bz,\rho)\subset\subset\Omega$ and
\begin{equation}\label{eq:dz4r}
\intmed_{\D(\bz,\rho)}|Du(z)-M|\,dz\leq\delta,
\end{equation}
then there exists an affine function $u_M:\R^2\longrightarrow\R^2$ with $Du_M=M$ and such that
\begin{align}\label{eq:linfty1}
|u(z)-u_M(z)|\leq\eta \rho && \forall\,z\in\D(\bz,\rho)\,.
\end{align}
\end{lemma}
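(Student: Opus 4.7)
The natural approach is to pick the additive constant of the affine map $u_M$ so as to make $w := u - u_M$ have vanishing integral mean on $\D(\bz, \rho)$, and then to turn an $L^q$ bound on $Dw$ (for some $q>2$) into the desired $L^\infty$ bound on $w$ via a subcritical Morrey inequality. Explicitly, I set
\[
u_M(z) := Mz + \intmed_{\D(\bz, \rho)}\bigl(u(y)-My\bigr)\,dy\,,
\]
so that $Dw = Du-M$ and $\intmed_{\D(\bz,\rho)} w=0$; note that $w$ is continuous since $u$ and $u_M$ both are.

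The bi-Lipschitz hypothesis $u\in\mathrm{Lip}_L$ and $M\in\MM(2\times 2;L)$ gives the pointwise bound $|Dw|\leq|Du|+|M|\leq 2L$ a.e.\ By interpolating this $L^\infty$ estimate with the $L^1$ smallness~\eqref{eq:dz4r}, I obtain, for every $q\geq 1$,
\[
\intmed_{\D(\bz, \rho)}|Dw|^q\,dz \;\leq\;(2L)^{q-1}\,\delta\,,
\qquad\text{i.e.}\qquad \|Dw\|_{L^q(\D(\bz, \rho))}\;\leq\;(2L)^{(q-1)/q}\,\delta^{1/q}\,\rho^{2/q}.
\]
Fix now any $q>2$, say $q=3$. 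Rescaling $\D(\bz,\rho)$ to the unit square and applying Morrey's embedding provides a purely dimensional constant $C(q)$ such that
\[
|w(z_1)-w(z_2)|\;\leq\;C(q)\,|z_1-z_2|^{1-2/q}\,\|Dw\|_{L^q(\D(\bz, \rho))}
\qquad\forall\,z_1,z_2\in\D(\bz,\rho)\,.
\]
Since each scalar component of $w$ is continuous and has zero integral mean on the connected square $\D(\bz,\rho)$, by an elementary intermediate-value argument it must vanish at some point of the square; evaluating the Hölder bound between an arbitrary $z$ and such a vanishing point and combining with the $L^q$ estimate above yields
\[
\|w\|_{L^\infty(\D(\bz,\rho))}\;\leq\; C(q)\,(\sqrt{2}\,\rho)^{1-2/q}\,\|Dw\|_{L^q(\D(\bz,\rho))}\;\leq\; C'(q)\,(2L)^{(q-1)/q}\,\delta^{1/q}\,\rho\,,
\]
where the powers of $\rho$ combine because $(1-2/q)+2/q=1$.

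Choosing $\delta=\delta(\eta):=\bigl(\eta/(C'(3)\,(2L)^{2/3})\bigr)^{3}$ then makes the right-hand side $\leq\eta\rho$, which is exactly~\eqref{eq:linfty1}; the dependence on $L$ is absorbed into the notation $\delta(\eta)$ because $L$ is fixed throughout the paper. The only delicate point is the interpolation step that upgrades the $L^1$ smallness of $Du-M$ to a subcritical $L^q$ bound with $q>2$, which is what makes Morrey's inequality applicable; crucially, this upgrade is free here because the bi-Lipschitz assumption already supplies an $L^\infty$ bound on $Du-M$. Once this is in place, the zero-mean normalization turns the Morrey oscillation bound into a genuine $L^\infty$ bound and closes the argument.
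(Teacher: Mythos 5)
Your proof is correct, but it takes a genuinely different route from the paper's. You exploit the a.e.\ bound $|Du-M|\le 2L$ (available from bi-Lipschitzness) to interpolate the $L^1$-smallness \eqref{eq:dz4r} into an $L^q$-bound for some $q>2$, and then invoke the scale-invariant form of Morrey's embedding $W^{1,q}\hookrightarrow C^{0,1-2/q}$ on the square, converting the resulting oscillation bound into an $L^\infty$ bound via a zero-mean normalization of $w=u-u_M$ (note the vanishing points of the two scalar components of $w$ may differ, but applying the H\"older estimate component-wise, as you implicitly do, handles this at the cost of a harmless factor $\sqrt{2}$). The paper instead avoids any Sobolev embedding and argues by hand: it slices the square by horizontal and vertical lines, uses Fubini to select a large-measure set of ``good'' lines on which $\int|Du-M|$ is small, bounds the oscillation of $\varphi=u-u_M$ between points on good lines by integrating $D\varphi$ along two sides of a rectangle, and then fills in the remaining points via the $2L$-Lipschitz bound on $\varphi$. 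The trade-off is essentially one of taste and dependencies: the paper's argument is elementary and completely self-contained with an explicit, easily-traceable $\delta\sim\eta^2/L$, while yours is shorter and more conceptual but routes through Morrey's inequality and delivers $\delta\sim(\eta/L^{2/3})^3$ for $q=3$. Both give a $\delta$ depending on $L$ as well as $\eta$, which is consistent with the statement since $L$ is fixed throughout the paper.
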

\begin{proof}
Up to a translation, we are allowed to assume for simplicity that $\bar z=u(\bar z)=(0,0)\in\R^2$. Let us then call, for a big constant $R$ to be specified later,
\begin{align*}
B^1&:=\bigg\{x\in\big[-\rho/2,\rho/2\big]:\,\int_{-\rho/2}^{\rho/2}|Du(x,t)-M|\,dt\leq \rho R \delta\bigg\}\,,\\
B^2&:=\bigg\{y\in\big[-\rho/2,\rho/2\big]:\,\int_{-\rho/2}^{\rho/2}|Du(t,y)-M|\,dt\leq \rho R \delta\bigg\}\,.
\end{align*}
Notice that, since $u$ is bi-Lipschitz on $\Omega$, then so are its restrictions to the horizontal and vertical segments of the square $\D(0,\rho)$. Hence, the above integrals make sense for every $x$ and $y$. By~\eqref{eq:dz4r} and Fubini--Tonelli Theorem, we readily obtain
\begin{align}\label{piccolo}
\H^1\Big( \big[ -\rho/2, \rho/2\big]\setminus B^1\Big)\leq \frac \rho R\,, && \H^1\Big( \big[ -\rho/2, \rho/2\big]\setminus B^2\Big)\leq \frac \rho R\,.
\end{align}
Define now $u_M(z) = M z$, and $\varphi(z)= u(z) - u_M(z)$. For any $x_1,\, x_2\in B^1$ and $y_1,\, y_2\in B^2$ we immediately get
\begin{equation}\label{firste}\begin{split}
\big| \varphi(x_1,y_1) - \varphi(x_2,y_2)\big|
&\leq \big| \varphi(x_1,y_1) - \varphi(x_2,y_1) \big|+ \big| \varphi(x_2,y_1) - \varphi(x_2,y_2) \big|\\
&\leq\int_{x_1}^{x_2} \big|Du(t,y_1)-M\big|\,dt+\int_{y_1}^{y_2} \big|Du(x_2,t)-M\big|\,dt \leq 2\rho R \delta\,.
\end{split}\end{equation}
Let now $(x,y)\in\D(\bar z,\rho)$ be a generic point. By~(\ref{piccolo}), there exist $x_1\in B^1$ and $y_1\in B^2$ so that
\begin{align*}
\big| x - x_1\big| \leq \frac{\rho}{R}\,, && \big| y - y_1\big| \leq \frac{\rho}{R}\,,
\end{align*}
and since $u$ and $u_M$ are $L$ bi-Lipschitz, thus $\varphi$ is $2L$-Lipschitz, we get
\begin{equation}\label{seconde}
\big|\varphi(x,y) - \varphi(x_1,y_1) \big|\leq \frac{2 \sqrt{2} \rho L}{R}\,.
\end{equation}
Let finally $(x,y)$ and $(\tilde x,\tilde y)$ be two generic points in $\D(\bz,r)$. Putting together~(\ref{firste}) and~(\ref{seconde}) we immediately get
\[
\big| \varphi(x,y) - \varphi(\tilde x,\tilde y)\big| \leq \frac{4 \sqrt{2} \rho L}{R} + 2 \rho R \delta \leq \eta \rho\,,
\]
where the last inequality is true up to take $R$ big enough and then $\delta$ small enough. Since $\varphi(0,0)=0$, this concludes the proof.
\end{proof}

\subsection{{A large right polygon made of Lebesgue squares}}\label{subsect:12}

In this section we show that, for any $\eta>0$, it is possible to construct a right polygon $\Omega_\eta\subset\subset\Omega$ of side length $r_\eta$ such that $\LL(\Omega\setminus\Omega_\eta)\leq\eta$ and such that, for any square $\D(z,r_\eta)$ of the $r_\eta$-tiling of $\Omega_\eta$, the assumption~(\ref{eq:dz4r}) of Lemma~\ref{lem:linfty1} holds on the bigger square $\D(z,3 r_\eta)$. As we will show in Section~\ref{subs:apprbuoni}, if we choose $\eta$ and then $r_\eta$ small enough, the corresponding $(\Omega_\eta, r_\eta)$-interpolation of $u$ satisfies the requirements of Proposition~\ref{prop:lebappr}. Then, $\Omega_\eta$ will turn out to be the right polygon of Lebesgue squares we are looking for. The goal of this section is to show the following estimate.

\begin{lemma}\label{lem:quadbuoni}
For every $\eta>0$ there exists a constant $r=r(\eta)>0$ and an $r$-polygon $\Omega_\eta\subset\subset\Omega$ such that $\LL\big(\Omega\setminus\Omega_\eta\big)\leq\eta$ and each square of the $r$-tiling $\{\D(z_{\alpha},r)\}_{\alpha\in\II(r)}$ satisfies the following properties,
\begin{align}
&\D(z_\alpha,3r)\subset\subset\Omega\quad\forall\,\alpha\in\II(r)\,,\label{1buoni}\\
&\intmed_{\D(z_\alpha,3r)}|Du(z)-M|\,dz\leq\delta(\eta)\text{ for some $M=M(\alpha)\in\MM(2\times2;L)$}\,.\label{1buonibis}
\end{align}
\end{lemma}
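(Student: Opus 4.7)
The idea is to combine Lebesgue's differentiation theorem for $Du$ with a simple measure-theoretic count on the tiles of a sufficiently fine $r$-tiling. I fix at the outset $\delta:=\delta(\eta)$, the constant from Lemma~\ref{lem:linfty1}, so that~\eqref{1buonibis} is exactly the hypothesis needed to later invoke that lemma.

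\emph{Step 1 -- good points and off-centre averages.} Let $E\subseteq\Omega$ be the set of Lebesgue points $z$ of $Du$ at which in addition $Du(z)\in\MM(2\times 2;L)$; since $u$ is $L$ bi-Lipschitz, Rademacher's theorem gives $Du\in\MM(2\times 2;L)$ almost everywhere, hence $\LL(\Omega\setminus E)=0$. Lebesgue differentiation attaches to each $z\in E$ a scale $\rho(z)>0$ below which the average of $|Du-Du(z)|$ on every ball $\B(z,s)$ with $s\leq 3\sqrt{2}\,\rho(z)$ is at most $\delta/(2\pi)$. The key remark is that whenever $z\in\D(z_\alpha,r)$ with $r\leq\rho(z)$, the enlarged square $\D(z_\alpha,3r)$ is contained in the concentric ball $\B(z,3\sqrt{2}\,r)$; comparing the two volumes (ratio $2\pi$) one obtains the desired bound $\delta$ for the \emph{off-centre} average of $|Du-Du(z)|$ on $\D(z_\alpha,3r)$.

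\emph{Step 2 -- choosing the scale $r$.} Setting $G_r:=\{z\in E:\rho(z)\geq r\}$, the family is monotone in $r\to 0^+$ and exhausts $E$, hence $\LL(\Omega\setminus G_r)\to 0$. Similarly, because $\Omega$ is open and bounded, the boundary strip $A_r:=\{z\in\Omega:d(z,\partial\Omega)\leq 3\sqrt{2}\,r\}$ satisfies $\LL(A_r)\to 0$. I pick $r=r(\eta)>0$ small enough that both $\LL(\Omega\setminus G_r)$ and $\LL(A_r)$ are at most $\eta/2$.

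\emph{Step 3 -- assembling $\Omega_\eta$.} I take any $r$-tiling of $\R^2$ by closed axis-parallel squares and call a tile $\D(z_\alpha,r)$ \emph{admissible} when $\D(z_\alpha,3r)\subset\subset\Omega$ and $\D(z_\alpha,r)\cap G_r\neq\emptyset$. Only finitely many tiles are admissible (as $\Omega$ is bounded), and I define $\Omega_\eta$ to be the interior of their union; by construction it is an $r$-polygon whose $r$-tiling is precisely the family of admissible tiles, and~\eqref{1buoni} holds by definition. For~\eqref{1buonibis} it suffices to choose any $z\in\D(z_\alpha,r)\cap G_r$ and take $M:=Du(z)$, applying Step~1. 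Finally, any point of $\Omega\setminus\Omega_\eta$ lies in a tile violating one of the two admissibility conditions, hence it lies either in $A_r$ or in $\Omega\setminus G_r$, and the choice of $r$ gives $\LL(\Omega\setminus\Omega_\eta)\leq\LL(A_r)+\LL(\Omega\setminus G_r)\leq\eta$.

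The only genuinely technical point is in Step~1: Lebesgue differentiation naturally controls averages on \emph{centred} balls, whereas~\eqref{1buonibis} requires control on \emph{off-centred} squares. This is bypassed by nesting the square in a concentric ball of comparable volume, paying only the dimensional factor $2\pi$ that is absorbed from the start in the threshold $\delta/(2\pi)$; everything else is a soft covering/pigeonhole argument.
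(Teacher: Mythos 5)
Your proof is correct and follows essentially the same approach as the paper: Lebesgue differentiation for $Du$, the choice $M=Du(z)$ for a Lebesgue point $z$ in each tile, an off-centre inclusion placing $\D(z_\alpha,3r)$ inside a slightly larger region centred at $z$ (you use the ball $\B(z,3\sqrt{2}\,r)$ with volume factor $2\pi$; the paper uses the square $\D(z,4r)$ with volume factor $16/9$), and a measure estimate on the bad set. The only organisational difference is that you impose both admissibility conditions directly on tiles of an $\R^2$-grid, rather than first fixing an intermediate $r_0$-polygon $\Omega_0$ and then refining it as the paper does -- a mild streamlining that changes nothing substantial.
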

\begin{proof}
We start selecting some $r_0=r_0(\eta)>0$ and an $r_0$-polygon $\Omega_0\subset\subset \Omega$ such that $\LL\big(\Omega\setminus\Omega_0\big)\leq \eta/2$ and each square of the $r_0$-tiling of $\Omega_0$ satisfies~(\ref{1buoni}). Then, for every $r$ such that $r_0 \in r \N$, we can regard $\Omega_0$ also as an $r$-polygon, and consequently call $\{\D(z_\alpha,r)\}_{\alpha\in\II_0(r)}$ its $r$-tiling. We define the set
\[
\II(r):=\Big\{\alpha\in\II_0(r):\,\intmed_{\D(z_\alpha,3r)} |Du-M|\leq\delta\text{ for some $M=M(\alpha)\in\MM(2\times2;L)$}\Big\},
\]
where $\delta=\delta(\eta)$ is given by Lemma~\ref{lem:linfty1}, and we let
\[
\Omega_\eta:=\bigcup_{\alpha\in\II(r)} \D(z_\alpha,r)\,.
\]
Since property~(\ref{1buonibis}) is true by construction, to conclude the proof it is enough to select a suitable $r=r(\eta)$ in such a way that $\LL\big(\Omega_0\setminus\Omega_\eta\big)\leq\eta/2$.\par
To do so, we apply the Lebesgue Differentiation Theorem to the map $Du$ finding that, for $\LL$-a.e. $z\in\Omega_0$, there exists $r(z)>0$ such that $\D(z,4r(z))\subseteq \Omega_0$ and
\begin{align*}
\intmed_{\D(z,\rho)}\big|Du(w)-Du(z)\big|\,dw\leq\frac{\delta}{2} && \forall\,0<\rho\leq 4r(z)\,.
\end{align*}
We can then choose $r=r(\eta)$ so small that the set $A(r):=\big\{ z\in \Omega_0 :\, r(z) \leq r\big\}$ satisfies
\begin{equation}\label{Arsmall}
\LL\big(A(r)\big)\leq \eta/2\,.
\end{equation}
We now claim that, for each $\alpha\in \II_0(r)$,
\begin{equation}\label{imply}
\D(z_\alpha,{r})\not\subseteq  A(r) \qquad\Longrightarrow\qquad\alpha\in \II(r).
\end{equation}
Indeed, letting $M=Du(z)$ for some $z\in\D(z_\alpha,r)\setminus A(r)$, by definition of $A(r)$ and $r(z)$ we get
\[\begin{split}
\intmed_{\D(z_\alpha,3r)}|Du-M|& = \frac{1}{9r^2} \int_{\D(z_\alpha,3r)}|Du-M|
\leq \frac{1}{9r^2} \int_{\D(z,4r)}|Du-M|
= \frac{16}{9} \intmed_{\D(z,4r)}|Du-M|\\
&\leq \frac{8}{9} \,\delta\,,
\end{split}\]
thus~(\ref{imply}) is obtained. As a consequence, by~(\ref{Arsmall}) we have that
\[
\LL\Big(\Omega_0\setminus\Omega_\eta\Big) = 
\LL\bigg(\bigcup\nolimits_{\alpha\in\II_0(r)\setminus\II(r)} \D(z_\alpha,r)\bigg)
\leq \LL\big( A(r)\big)\leq \frac \eta 2
\]
and, as we noticed above, this concludes the proof.
\end{proof}

\subsection{Affine approximation of ${u}$ on Lebesgue squares}\label{subs:apprbuoni}
In this section we complete the proof of Proposition~\ref{prop:lebappr}. At this point the proof reduces to show that, provided we choose $\eta$ small enough, the $(\Omega_\eta, r)$-interpolation of $u$ on the right polygon $\Omega_\eta$ as in Lemma~\ref{lem:quadbuoni} satisfies the properties of Proposition~\ref{prop:lebappr}.

\begin{proof}[Proof of Proposition~\ref{prop:lebappr}: ]
Let $\eps>0$ be a given constant. Then, let $\eta=\eta(\eps)$ be a sufficiently small constant, whose value will be precised later. Define now $\delta=\delta\big(\eta(\eps)\big)$ as in Lemma~\ref{lem:linfty1}, and define also $r=r\big(\eta(\eps)\big)$ and $\Omega_\eps=\Omega_{\eta(\eps)}$ according to Lemma~\ref{lem:quadbuoni}. We will show that the right polygon $\Omega_\eps$ fulfills all the requirements of the proposition as soon as $\eta(\eps)$ is small enough. To this aim we call, as in the statement, $v:\Omega_\eps\longrightarrow \Delta_\eps$ the $(\Omega_\eps, r)$-interpolation of $u$ (see Definition~\ref{def:uinterp}) on the right polygon $\Omega_\eps$.\par

Let us briefly fix some notation which will be used through the proof. For any $\alpha\in \II(r)$, we define $M_\alpha\in\MM(2\times 2;L)$ so that~(\ref{1buonibis}) holds. Applying Lemma~\ref{lem:linfty1} with $\rho=3r$, we get an affine function $u_\alpha:\R^2\longrightarrow \R^2$ such that $Du_\alpha=M_\alpha$ and
\begin{equation}\label{frominf}
\big|u-u_\alpha\big|\leq 3 \eta r \qquad \hbox{on } \D(z_\alpha,3r)\,.
\end{equation}
Figure~\ref{Fig:sara} depicts the functions $u$, $v$ and $u_\alpha$.

\begin{figure}[htbp]
\begin{center}
\input{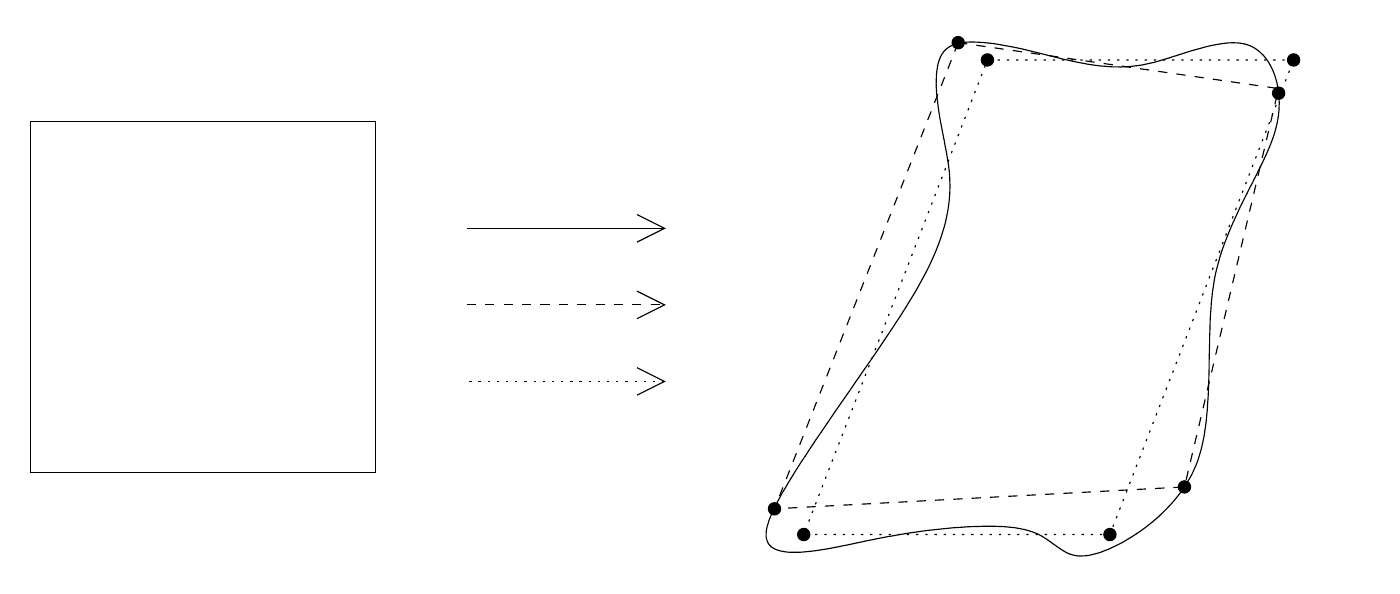tex_t}
\caption{The functions $u$, $v$ and $u_\alpha$ on a square.}\label{Fig:sara}
\end{center}
\end{figure}

We can then start the proof, which will be divided in some steps for clarity.

\step{I}{For any $\alpha\in\II(r)$, $v\big(\D(z_\alpha,r)\big)\subseteq u\big(\D(z_\alpha, 3 r)\big)$.}

Take $\alpha\in\II(r)$. Keeping in mind~(\ref{frominf}) and recalling the definition of $v$, we get that
\begin{equation}\label{tra1}
v\big( \D(z_\alpha,r)\big) \subseteq \B\Big(u_\alpha\big(\D(z_\alpha,r)\big),3\eta r\Big)\,,
\end{equation}
where for any set $X\subseteq \R^2$ we denote by $\B(X,r)$ the $r$-neighborhood of $X$. Similarly, we get that
\[
u\big(\D(z_\alpha, 3r )\big) \supseteq \Big\{ x:\, \B(x,3\eta r)\subseteq u_\alpha\big(\D(z_\alpha, 3r)\big)\Big\}\,.
\]
Hence, the step is concluded if
\[
\B\Big(u_\alpha\big(\D(z_\alpha,r)\big),6\eta r\Big)\subseteq u_\alpha\big(\D(z_\alpha, 3r)\big)\,,
\]
which in turn, recalling that $Du_\alpha \equiv M_\alpha \in \MM(2\times 2;L)$, is true as soon as $\eta< (6L)^{-1}$.\par

Observe that, as an immediate consequence of this step and~(\ref{1buoni}), we have $\Delta_\eps\subset \subset\Delta$, that is, (\ref{luino}) holds.

\step{II}{Injectivity of $v$.}

Take $\alpha\in\II(r)$. Applying again~(\ref{frominf}) as in Step~I, we deduce that $v$ is injective on $\D(z_\alpha,3r)\cap \Omega_\eps$ as soon as $\eta< (6L)^{-1}$. To conclude that $v$ is injective, then, we have to show that $v\big(\D(z_\alpha,r)\big)\cap v\big(\D(z_\beta,r)\big)=\emptyset$ if $\D(z_\alpha,r)$ and $\D(z_\beta,r)$ are two non-adjacent squares of the tiling of $\Omega_\eps$. And in fact, if $\D_\alpha$ and $\D_\beta$ are non-adjacent, then the fact that $v\big(\D(z_\alpha,r)\big)\cap v\big(\D(z_\beta,r)\big)=\emptyset$ for a small $\eta$ follows as an immediate consequence of~(\ref{frominf}) and~(\ref{tra1}) arguing as in Step~I.

\step{III}{Estimate on $\|v-u\|_{L^{\infty}(\Omega_\eps)}$ and on $\|v^{-1}-u^{-1}\|_{L^\infty(\Delta_\eps)}$.}

Fix a generic square $\D_\alpha$ of the $r$-tiling of $\Omega_\eps$, and observe that $\|u_\alpha-u\|_{L^{\infty}(\D_\alpha)}\leq 3\eta r$ by~(\ref{frominf}). Moreover, $v$ and $u_\alpha$ are both affine on each of the two right triangles on which $\D_\alpha$ is divided, and since on the vertices of these triangles $v$ equals $u$, again by~(\ref{frominf}) we deduce also $\|v-u_\alpha\|_{L^{\infty}(\D_\alpha)}\leq 3\eta r$. Thanks to these two estimates, we deduce
\begin{equation}\label{est1}\begin{split}
\|v-u\|_{L^\infty(\Omega_\eps)} &= \sup_{\alpha\in\II(r)} \|v-u\|_{L^{\infty}(\D_\alpha)}
\leq \sup_{\alpha\in\II(r)}\|v-u_\alpha\|_{L^{\infty}(\D_\alpha)} + \|u_\alpha-u\|_{L^{\infty}(\D_\alpha)}\\
&\leq 6 \eta r 
\leq \frac{\eps}{4L}\,,
\end{split}\end{equation}
where the last inequality is true as soon as $\eta$, hence also $r$, is small enough.\par
Since we have already proven that $v$ is injective, the $L^{\infty}$ estimate for the inverse maps is now a simple consequence. Indeed, taking a generic point $w=v(z)\in\Delta_\eps$, with $z\in\Omega_\eps$, by~(\ref{est1}) we have
\[
\big|u^{-1}(w)-v^{-1}(w)\big|=\big|u^{-1}(v(z))-u^{-1}(u(z))\big| \leq L \big| v(z)-u(z)\big|\leq \frac \eps 4\,,
\]
so that
\begin{equation}\label{est2}
\|u^{-1}- v^{-1} \|_{L^\infty(\Delta_\eps)} \leq \frac \eps 4\,.
\end{equation}

\step{IV}{Estimate on $\|Dv-Du\|_{L^p(\Omega_\eps)}$.}

Let us start observing that, since by construction $|Du|\leq L$ and $|Dv|\leq \sqrt 2 L$, one has
\begin{equation}\label{eq:2}
\begin{split}
\|Dv-Du\|^p_{L^p(\Omega_\eps)}&=\sum_{\alpha\in\II(r)} \|Dv-Du\|^p_{L^p(\D_\alpha)}\\
& \leq \big( 3L\big)^{p-1} \sum_{\alpha\in\II(r)} \|Dv-Du\|_{L^1(\D_\alpha)}\\
&\leq \big( 3L\big)^{p-1} \sum_{\alpha\in\II(r)} \Big(\|Dv-Du_\alpha\|_{L^1(\D_\alpha)}+\|Du_\alpha-Du\|_{L^1(\D_\alpha)}\Big)\,.
\end{split}\end{equation}
By~(\ref{1buonibis}), we already know that for each $\alpha\in\II(r)$
\begin{equation}\label{ins1}
\|Du-Du_\alpha\|_{L^1(\D_\alpha)} = \int_{\D(z_\alpha, r)} \big|Du - Du_\alpha \big| \leq 9r^2 \intmed_{\D(z_\alpha, 3r)} \big|Du - M_\alpha \big| \leq 9 \delta r^2 = 9\delta \big|\D_\alpha\big|\,.
\end{equation}
Let us then concentrate on $\|Dv -Du_\alpha \|_{L^1(\D_\alpha)}$. Consider the triangle $T=z_1z_2z_3$, being
\begin{align*}
z_1\equiv z_\alpha + \big(-r/2, -r/2 \big)\,, &&z_2\equiv z_\alpha + \big(r/2, -r/2 \big)\,, &&z_3\equiv z_\alpha + \big(r/2, r/2 \big)\,.
\end{align*}
Since both $v$ and $u_\alpha$ are affine on $T$, then in particular $Dv - Du_\alpha$ is a constant linear function on $T$. Recalling again~(\ref{frominf}), let us then calculate
\[\begin{split}
\Big|\big(Dv_{|T} - Du_\alpha\big) (r \ee_1)\Big| &=  \Big| \big(v(z_2) - v(z_1)\big) -\big(u_\alpha(z_2)-u_\alpha(z_1) \big)\Big| \\
&=  \Big| \big(u(z_2) - u(z_1)\big) -\big(u_\alpha(z_2)-u_\alpha(z_1) \big)\Big| \leq 6 \eta r\,,
\end{split}\]
and similarly
\[
\Big|\big(Dv_{|T} - Du_\alpha\big) (r \ee_2)\Big|
= \Big| \big(v(z_3) - v(z_2)\big) -\big(u_\alpha(z_3)-u_\alpha(z_2) \Big| \leq 6\eta r\,.
\]
We deduce that $\|Dv - Du_\alpha\|_{L^\infty(T)}\leq 6\sqrt 2 \eta$. We can argue in the same way for all the different triangles in which $\D(z_\alpha,3r)\cap\Omega_\eps$ is divided, thus getting
\begin{equation}\label{ins2}
\big\|Dv - Du_\alpha\big\|_{L^\infty(\D(z_\alpha,3r)\cap \Omega_\eps)}\leq 6\sqrt 2 \eta\leq 9\eta\,.
\end{equation}
Inserting this estimate and~(\ref{ins1}) into~(\ref{eq:2}), we get
\begin{equation}\label{est3}
\|Dv-Du\|^p_{L^p(\Omega_\eps)}\leq \big( 3L\big)^{p-1} 9\big(\delta+\eta) \sum_{\alpha\in\II(r)} \big|\D_\alpha\big|
=\big( 3L\big)^{p-1} 9 \big(\eta+\delta\big) \big|\Omega_\eps\big| \leq \bigg(\frac \eps 4\bigg)^p\,,
\end{equation}
where again the last inequality holds true as soon as $\eta$, hence also $\delta$, is small enough.

\step{V}{Bi-Lipschitz estimate for $v$.}

Take a point $z\in \D(z_\alpha,3r)\cap\Omega_\eps$. Recalling that $u_\alpha$ is $L$ bi-Lipschitz and~(\ref{ins2}), we get
\begin{equation}\label{princ}
\frac 1L - 9\eta \leq \big|Dv(z)\big| \leq L + 9\eta\,.
\end{equation}
Let then $z,\,z'\in\Omega_\eps$ be two generic points, and assume that $z\in\D(z_\alpha,r)$. If one has $z'\in\D(z_\alpha,3r)$, then an immediate geometric argument using the definition of $v$ and~(\ref{princ}) yields
\begin{equation}\label{bil1}
\bigg(\frac 1L - 9\eta\bigg) \big|z-z'\big| \leq \big|v(z) - v(z')\big| \leq \big(L+9\eta\big) \big|z-z'\big|\,.
\end{equation}
On the other hand, assume that $z'\notin \D(z_\alpha,3r)$, so that $|z-z'|\geq r$. In this case, the $L^\infty$ estimate~(\ref{est1}) gives
\begin{equation}\label{bil2}
\big|v(z) - v(z')\big| \leq \big|u(z) - u(z')\big|+\big|v(z) - u(z)\big|+\big|v(z') - u(z')\big| \leq \big( L+12 \eta\big) \big| z - z'\big|\,,
\end{equation}
and similarly
\begin{equation}\label{bil3}
\big|v(z) - v(z')\big|\geq\big|u(z)-u(z')\big|-\big|v(z) - u(z)\big|-\big|v(z') - u(z')\big| \geq \bigg(\frac 1L- 12 \eta  \bigg) \big| z - z'\big|\,.
\end{equation}
Putting together~(\ref{bil1}), (\ref{bil2}) and~(\ref{bil3}), provided that $\eta$ is small enough we conclude that $v$ is $L+\eps$ bi-Lipschitz.

\step{VI}{Estimate on $\|Dv^{-1}-Du^{-1}\|_{L^{p}(\Delta_\eps)}$.}

Fix a generic $\alpha\in\II(r)$, and recall the elementary fact that, given two invertible matrices $A$ and $B$, one always has $\big|B^{-1} - A^{-1}\big| \leq \big| A^{-1}\big| \big| B^{-1}\big| \big| B - A\big|$. Since by construction $u$ and $u_\alpha$ are $L$ bi-Lipschitz, and $Du_\alpha$ is constant on $\D_\alpha$, then Step~I, (\ref{1buonibis}) and~(\ref{ins1}) ensure that
\[\begin{split}
\|Du^{-1}&-Du^{-1}_\alpha\|_{L^1(v(\D_\alpha))}=
\int_{v(\D(z_\alpha,r))} \big| Du^{-1}(z) - Du^{-1}_\alpha(z)\big|\,dz\\
&\leq L^2 \int_{u(\D(z_\alpha,3r))} \big| Du\big(u^{-1}(z)\big) - M_\alpha\big|\,dz
\leq L^4 \int_{\D(z_\alpha,3r)} \big| Du(w) - M_\alpha\big|\,dw\\
&= 9r^2 L^4\intmed_{\D(z_\alpha, 3r)} \big| Du - M_\alpha\big|
\leq 9r^2 L^4 \delta = 9 L^4 \delta \, \big|\D_\alpha\big|\,.
\end{split}\]
On the other hand, again using $\big|B^{-1} - A^{-1}\big| \leq \big| A^{-1}\big| \big| B^{-1}\big| \big| B - A\big|$, the fact that $Du_\alpha$ is constant on $\D_\alpha$, the fact that $u_\alpha$ is $L$ bi-Lipschitz by definition while $v$ is $(L+\eps)$ bi-Lipschitz by Step~V, and~(\ref{ins2}), we readily obtain
\[
\big\|Dv^{-1} - Du_\alpha^{-1}\big\|_{L^\infty(v(\D_\alpha))}\leq  L (L+\eps) 9\eta\leq 18 L^2 \eta\,.
\]
We can then repeat the same argument as in~(\ref{eq:2}) to get
\begin{equation}\label{est4}\begin{split}
\hspace{-80pt}
\|Dv^{-1}-Du^{-1}\|^p_{L^p(\Delta_\eps)} 
&\leq \big( 3L\big)^{p-1} \sum_{\alpha\in\II(r)} \|Dv^{-1}-Du^{-1}_\alpha\|_{L^1(v(\D_\alpha))}+\|Du^{-1}_\alpha-Du^{-1}\|_{L^1(v(\D_\alpha))} 
\hspace{-80pt}\\
&\leq  \big(3L\big)^{p-1} \Big( 18 L^2 \eta \big| \Delta_\eps\big|  + 9L^4 \delta \big| \Omega_\eps\big|\Big)
\leq \bigg(\frac \eps 4\bigg)^p\,,
\end{split}\end{equation}
where as usual the last estimate holds possibly decreasing $\eta$ and then also $\delta$.

\step{VII}{Conclusion.}

Let us now conclude the proof of Proposition~\ref{prop:lebappr} by checking that $\Omega_\eps$ fulfills all the requirements of the statement. The fact that $v$ is $L+\eps$ bi-Lipschitz is given by Step~V. The validity of~(\ref{luino}) has been observed in Step~I. The estimate~(\ref{eq:lebappr3}) just follows adding~(\ref{est1}), (\ref{est2}), (\ref{est3}) and~(\ref{est4}). Concerning~(\ref{luiue}), the facts that $\Omega\setminus\Omega_\eps$ is small and that $d(\Omega_\eps,\R^2\setminus\Omega)\geq 2r$ are given by Lemma~\ref{lem:quadbuoni}, while the fact that also $\Delta\setminus\Delta_\eps$ is small is immediate by the bi-Lipschitz property of $u$ and the $L^\infty$ estimate~(\ref{eq:linfty1}) of Lemma~\ref{lem:linfty1}. Finally, (\ref{luiro}) is immediate provided that we choose $\eta\leq \sqrt 2/(36 L^3)$, since~(\ref{est1}) ensures that $\|v-u\|_{L^\infty(\Omega_\eps)}\leq 6\eta r$.
\end{proof}

\section{Approximation out of ``Lebesgue squares''\label{sect:2}}

In this section we complete the proof of Theorem~\ref{main}, defining the countably piecewise affine approximation of $u$ out of the large right polygon $\Omega_\eps$ of ``Lebesgue squares'' constructed in Proposition~\ref{prop:lebappr}.
Following the scheme outlined in Section~\ref{sect:scheme}, the construction is carried out in three steps: the covering of $\Omega\setminus\Omega_\eps$ with a suitable (locally finite) tiling, the definition of a bi-Lipschitz piecewise affine approximation of $u$ on the grid of the tiling and, finally, the extension of the approximating function to the interior of the grid by means of Theorem~\ref{teo:bilsquare}. The main result of this section is the following.
\begin{proposition}\label{prop:lipext}
Let $v_\eps:\Omega_\eps\longrightarrow\Delta_\eps$ be a piecewise affine bi-Lipschitz function as in Proposition~\ref{prop:lebappr}. Then, there exists a $C_1 L^4$ bi-Lipschitz countably piecewise affine function $\tilde v_\eps:\Omega\setminus\Omega_\eps\longrightarrow\Delta\setminus\Delta_\eps$, where $C_1$ is a geometric constant, such that $\tilde v_\eps=u$ on $\partial \Omega$ and $\tilde v_\eps=v_\eps$ on $\partial\Omega_\eps$.
\end{proposition}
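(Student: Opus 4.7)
The plan is to follow the three-step program sketched in Section~\ref{sect:scheme}: first build a tiling of $\Omega\setminus\Omega_\eps$ compatible with the $r$-tiling on $\partial\Omega_\eps$, then define a piecewise-affine bi-Lipschitz approximation $v$ on the $1$-skeleton $\Q$ of the tiling, and finally use Theorem~\ref{teo:bilsquare} to fill each square of the tiling with a piecewise-affine bi-Lipschitz extension.

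For the tiling (Step~1), I would use a Whitney-type construction with axis-parallel squares whose sides are parallel to $\ee_1,\ee_2$: start with a transition layer of squares of side $r,\,r/2,\,r/4,\ldots$ along $\partial\Omega_\eps$ so as to match the $r$-tiling there (allowing adjacent squares to have side-length ratio $1$ or $2$), then continue dyadically with the diameter of each square comparable to its distance from $\partial\Omega$. This yields a locally finite tiling of $(\Omega,\Omega_\eps)$, necessarily countably infinite since the squares must shrink toward $\partial\Omega$, which is exactly what forces the final map to be countably, and not finitely, piecewise affine.

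For the grid map (Step~2), I would define $v\colon\Q\to\Delta$ piecewise-affinely, equal to $v_\eps$ on $\partial\Omega_\eps$ and extending continuously to $u$ on $\partial\Omega$ because the tiles shrink there. Along each segment of $\Q$, Lemma~\ref{lem:curves} supplies a piecewise-affine approximation of $u|_\Q$ which stays inside $\Delta$, is close to $u$ in $L^\infty$, and is bi-Lipschitz with constant of order $L$; at each vertex of $\Q$, Lemma~\ref{lem:cross} reconciles the up-to-four adjoining edge-approximations while preserving injectivity. By choosing the edge subdivisions finer than a fixed multiple of the local mesh size, I expect to make $v(\Q)\subseteq\Delta$ and $v|_\Q$ globally $72L$ bi-Lipschitz, the constant $72$ absorbing the geometric losses at the crosses and ensuring that images of non-adjacent pieces of $\Q$ remain disjoint.

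Once $v|_{\partial\D_\alpha}$ is piecewise affine and $72L$ bi-Lipschitz on the boundary of every tile (Step~3), Theorem~\ref{teo:bilsquare} produces, after rescaling $\D_\alpha$ to the unit square, a piecewise-affine $C_3(72L)^4$ bi-Lipschitz extension onto $\D_\alpha$ whose image is the Jordan region enclosed by $v(\partial\D_\alpha)$. The extensions glue across shared sides, where they already coincide with $v|_\Q$, into a globally injective countably piecewise affine map $\tilde v_\eps\colon\Omega\setminus\Omega_\eps\to\Delta\setminus\Delta_\eps$ with the required boundary values and bi-Lipschitz constant $C_1L^4=72^4 C_3 L^4$. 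I expect Step~2 to be the genuine obstacle: the one-dimensional approximation on each edge is the content of Lemma~\ref{lem:curves}, but the matching at vertices via Lemma~\ref{lem:cross} and the disjointness between boundary-images of distinct tiles must be coordinated so that the overall bi-Lipschitz constant remains linear in $L$. Steps~1 and~3 are by contrast a standard geometric decomposition and a direct invocation of the extension theorem, respectively.
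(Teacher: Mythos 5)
Your proposal follows the paper's proof essentially step for step: a tiling of $(\Omega,\Omega_\eps)$ with $r$-sized squares along $\partial\Omega_\eps$ and squares shrinking toward $\partial\Omega$, a $72L$ bi-Lipschitz piecewise affine map on the grid $\Q$ built from Lemma~\ref{lem:curves} and Lemma~\ref{lem:cross}, and a square-by-square fill via Theorem~\ref{teo:bilsquare} yielding the constant $C_1=72^4 C_3$. One small point of emphasis: in the paper the adjusted function $\uad$ of Lemma~\ref{lem:cross} is built first (it equals $u$ on the interior of each edge of $\Q'$ and is affine near each vertex, giving an $18L$ bi-Lipschitz but \emph{not} piecewise affine map), and Lemma~\ref{lem:curves} is then used to replace $u$ by a piecewise affine interpolation on the internal segments only, degrading the constant to $72L$ — rather than Lemma~\ref{lem:cross} ``reconciling'' per-edge approximations afterwards as your description suggests.
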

We can immediately notice that Theorem~\ref{main} follows as an easy consequence of Propositions~\ref{prop:lebappr} and~\ref{prop:lipext}.

\begin{proof}[Proof of Theorem~\ref{main}: ]
Take $\eps>0$, and apply Proposition~\ref{prop:lebappr} to get an $r$-polygon $\Omega_\eps\subset\subset \Omega$ and a piecewise affine bi-Lipschitz function $v_\eps:\Omega_\eps\longrightarrow\Delta_\eps$. By Proposition~\ref{prop:lipext}, we have a $C_1L^4$ bi-Lipschitz function $\tilde v_\eps: \Omega\setminus\Omega_\eps\longrightarrow\Delta\setminus\Delta_\eps$, so we can define the function $v:\Omega\longrightarrow\Delta$ as $v\equiv v_\eps$ on $\Omega_\eps$ and $v\equiv \tilde v_\eps$ on $\Omega\setminus\Omega_\eps$. Since $v_\eps$ (resp., $\tilde v_\eps$) is bi-Lipschitz with constant $L+\eps$ (resp., $C_1 L^4$), and $\tilde v_\eps = v_\eps$ on $\partial \Omega_\eps$, we have that $v$ is a bi-Lipschitz homeomorphism with constant $C_1 L^4$. Moreover, by construction $v$ is countably piecewise affine and it is orientation-preserving, since so is $u$ and $v\equiv u$ on $\partial \Omega$. We are then left with showing that $v$ satisfies~(\ref{approximation}), and by~(\ref{eq:lebappr3}) this basically reduces to consider what happens in $\Omega\setminus\Omega_\eps$. Since $\tilde v_\eps$ is bi-Lipschitz with constant $C_1 L^4$, by~(\ref{luiue}) we clearly have
\begin{equation}\label{main1}
\|Dv-Du\|_{L^p(\Omega\setminus\Omega_\eps)}\leq \|Dv-Du\|_{L^\infty(\Omega\setminus\Omega_\eps)} \big| \Omega\setminus\Omega_\eps\big|^{1/p}
\leq \big(L+ C_1 L^4\big) \eps^{1/p}\,,
\end{equation}
and similarly
\begin{equation}\label{main2}
\|Dv^{-1}-Du^{-1}\|_{L^p(\Delta\setminus\Delta_\eps)} \leq \big(L+ C_1 L^4\big) \eps^{1/p}\,.
\end{equation}
Concerning the $L^\infty$ estimates, since $\big|\Omega\setminus\Omega_\eps\big| \leq \eps$ then for every $z\in \Omega\setminus\Omega_\eps$ there exist $z'\in \Omega_\eps$ such that $|z-z'|\leq \sqrt{\eps/\pi}$, thus by~(\ref{eq:lebappr3}) we find
\[\begin{split}
|v(z)-u(z)| &\leq |v(z)-v(z')| + |v(z')-u(z')| + |u(z')-u(z)| \\
&\leq \big( L + C_1 L^4\big) \sqrt{\frac \eps\pi} + \|v_\eps - u\|_{L^\infty(\Omega_\eps)}
\leq \big( L + C_1 L^4\big) \sqrt{\frac \eps\pi} + \eps\,.
\end{split}\]
Arguing in the same way to bound $|v^{-1}(w)-u^{-1}(w)|$ for a generic $w\in \Delta\setminus\Delta_\eps$ yields
\begin{align}\label{main3}
\|v-u\|_{L^{\infty}(\Omega\setminus\Omega_\eps)} 
\leq \big( L + C_1 L^4\big) \sqrt{\frac \eps\pi} + \eps\,, &&
 \|v^{-1}-u^{-1}\|_{L^{\infty}(\Delta\setminus\Delta_\eps)} 
\leq \big( L + C_1 L^4\big) \sqrt{\frac \eps\pi} + \eps\,.
\end{align}
Putting together~(\ref{main1}), (\ref{main2}) and~(\ref{main3}), we find that $v$ satisfies~(\ref{approximation}) as soon as $\eps$ is chosen small enough, depending on $\bar\eps$. Hence, we have found the countably piecewise affine approximation as required. Concerning the smooth approximation, its existence directly follows applying Theorem~\ref{MP}, thus we have in particular $C_2=70 C_1^{7/3}$.
\end{proof}

We have now to prove Proposition~\ref{prop:lipext}. To do so, let us fix some notation. Recall that $\Omega_\eps$ is an $r$-polygon for some $r=r(\eps)$. We will then start by selecting a suitable tiling $\{\D_j=\D(z_j,r_j)\}_{j\in\N}$ of $(\Omega,\Omega_\eps)$, according with Definition~\ref{def:tiling}. This means that $\{\D_j\}$ is a tiling of $\Omega$ whose restriction to $\Omega_\eps$ coincides with the $r$-tiling of $\Omega_\eps$. The only requirements that we ask to $\{\D_j\}$ are the following,
\begin{flalign}
\quad &r_j = r \quad \forall\,j:\, \clos\D_j \cap \partial \Omega_\eps \neq \emptyset\,, & \label{first}\\
& \D_j \subset\subset \Omega\quad \forall\, j\in\N\,.\label{second}
\end{flalign}
Notice that~(\ref{first}) is possible thanks to~(\ref{luiue}), while~(\ref{second}) basically means that the tiling has to be countable instead of finite, and the squares have to become smaller and smaller when approaching the boundary of $\Omega$. Of course, in the particular case when $\Omega$ itself is a right polygon, instead of~(\ref{second}) one could have asked the tiling to be finite (we will discuss this possibility more in detail in Remark~\ref{remfin}).\par
Since it is of course possible to find a tiling of $(\Omega,\Omega_\eps)$ which satisfies~(\ref{first}) and~(\ref{second}), from now on we fix such a tiling, and we denote by $\Q$ its associated $1$-dimensional grid according to Definition~\ref{def:grid}. Moreover, we set $\Q'=\Q\cap (\Omega\setminus \clos{\Omega_\eps})$, which is the part of the grid on which we really need to work. Notice that $\Q'$ is a $1$-dimensional set, made by all the sides of the grid $\Q$ which lie in $\Omega\setminus\clos\Omega_\eps$.\par
Let us call now $w_\alpha$ the generic vertex of $\Q'$, hence, the generic vertex of the grid $\Q$ which does not belong to $\Omega_\eps$ (however, notice that $w_\alpha$ may belong to $\partial\Omega_\eps$, since by definition $\Q'$ is relatively open around $\partial\Omega_\eps$). Each vertex $w_\alpha$ is of the form $w_\alpha = z_j + (\pm r_j/2, \pm r_j/2)$ for some $j$, and it is one extreme of either three, or four sides of $\Q$. To shorten the notation, we will denote the other extremes of these sides by $w_\alpha^i$ with $1\leq i \leq \bar i(\alpha)$, being then $\bar i(\alpha)\in \{3,\,4\}$. Finally, we will denote by $\ell_\alpha$ the minimum of the lengths of the sides $w_\alpha w_\alpha^i$. Observe that if $w_\alpha\notin\partial\Omega_\eps$, then $w_\alpha$ is one extreme of either three or four sides of $\Q'\subseteq \Q$. On the other hand, if $w_\alpha\in\partial\Omega_\eps$, then by~(\ref{first}) it is one extreme of four sides of $\Q$, either one or two of these four sides lies in $\Q'$, and $\ell_\alpha=r$.\par

Thanks to Theorem~\ref{teo:bilsquare}, to obtain the piecewise affine function $\tilde v_\eps$ of Proposition~\ref{prop:lipext} we essntially have to define it, in a suitable way, on the $1$-dimensional grid $\Q'$. To do so, our main ingredients are the following two lemmas. The first one (Lemma~\ref{lem:curves}) states that, on any given segment of $\Omega$, $u$ can be approximated as well as desired in $L^{\infty}$ with suitable piecewise affine $3L$ bi-Lipschitz functions. This is of course of primary importance to define the piecewise affine approximation $\tilde v_\eps$ of $u$ on the sides of the grid $\Q'$, but it is still not enough. In fact, we have to take some additional care to treat the ``crosses'' of $\Q'$ (that is, the regions around the vertices), in order to be sure that our affine $\tilde v_\eps$ on $\Q'$ remains injective. This will be obtained thanks to the second Lemma~\ref{lem:cross}.\par

To state the next two lemmas, it will be useful to introduce some piece of notation.

\begin{definition}[Interpolation of $u$]\label{def:interp}
Given a segment $pq\subset\subset \Omega$, let $\{z_i z_{i+1}\}_{0\leq i < N}$ be $N$ essentially disjoint segments whose union is $pq$, with $z_0=p$ and $z_N=q$. For any such subdivision of the segment, will call \emph{interpolation of $u$} the finitely piecewise affine function $u_{pq}:pq\longrightarrow \R^2$ such that, for any $0\leq i \leq N-1$ and any $0\leq t \leq 1$,
\[
u_{pq}\Big(z_{i}+t\big(z_{i+1}-z_i\big)\Big)=u(z_{i})+t\big(u(z_{i+1})-u(z_i)\big)\,.
\]
\end{definition}

\begin{definition}[Adjusted function and crosses]\label{eq:valpha}
Let $\{\xi_\alpha\}_{\alpha\in\N}$ be a sequence such that for any $\alpha$ one has $3 L \xi_\alpha \leq \ell_\alpha$. For any $\alpha\in\N$ and any $1\leq i\leq \bar i(\alpha)$, we define $\xi_\alpha^i$ as the biggest number such that
\[
\left\{\begin{array}{ll}
\Big|u(w_\alpha)-u\big(w_\alpha + \xi_\alpha^i(w^i_\alpha-w_\alpha)\big)\Big| \leq \xi_\alpha \qquad &\hbox{if $w_\alpha w_\alpha^i\subset\Q'$}\,,\\[5pt]
\Big|u(w_\alpha)-v_\eps\big(w_\alpha + \xi_\alpha^i(w^i_\alpha-w_\alpha)\big)\Big| \leq \xi_\alpha \qquad &\hbox{if $w_\alpha w_\alpha^i\subset \Q\setminus\Q'$}\,.\\
\end{array}\right.
\]
We will call \emph{adjusted function} the function $\uad:\Q\longrightarrow\R^2$ defined as follows. First of all, we set $\uad=v_\eps$ on $\Q\setminus \Q'$. Then, let $w_\alpha w_\beta$ be a side of $\Q'$, thus being $w_\beta=w_\alpha^i$ and $w_\alpha=w_\beta^j$ for two suitable $i$ and $j$. We define
\[
\uad\big(w_\alpha + t(w_\beta-w_\alpha)\big):=\left\{\begin{array}{ll}
u(w_\alpha)+\frac{t}{\xi_\alpha^i}\Big(u\big(w_\alpha+\xi_\alpha^i(w_\beta-w_\alpha)\big)-u(w_\alpha)\Big)&\hbox{in $(0,\xi_\alpha^i)$\,,}\\
u\big(w_\alpha + t(w_\beta-w_\alpha)\big) &\hbox{in $(\xi_\alpha^i,1-\xi_\beta^j)$\,,}\\
u(w_\beta)+\frac {(1-t)}{\xi_\beta^j}\Big(u\big(w_\beta+\xi_\beta^j (w_\alpha-w_\beta)\big)-u(w_\beta)\Big) &\hbox{in $(1-\xi_\beta^j,1)$.}
\end{array}\right.
\]
In words, for any side in $\Q'$, $\uad$ coincides with $u$ in the internal part of the side, while the two parts closest to the vertices $w_\alpha$ and $w_\beta$ are replaced with segments. Moreover, for any vertex $w_\alpha$ of $\Q'$ we will define its associated \emph{cross} as
\[
Z_\alpha = \bigcup_{i=1}^{\bar i(\alpha)} \Big\{ w_\alpha + t (w_\alpha^i - w_\alpha):\, 0\leq t \leq \xi_\alpha^i\Big\}\,.
\]
\end{definition}

\begin{remark}\label{rightafter}
Some remarks are now in order. First of all, since $u$ is $L$ bi-Lipschitz on $\Omega$, and also $v_\eps$ is $L$ bi-Lipschitz on any segment $w_\alpha w_\alpha^i\subseteq \Q\setminus\Q'$, by the choice $3 L \xi_\alpha \leq \ell_\alpha$ we directly deduce that $0<\xi_\alpha^i\leq 1/3$ for any $\alpha$ and any $i$. Thus, two different crosses have always empty intersection. For the same reason, each of the $\bar i(\alpha)$ extremes of the cross $Z_\alpha$ has a distance at least $\xi_\alpha/L$ from $w_\alpha$. Finally, for all different $\alpha$ and $\beta$ one has $\B(u(w_\alpha),\xi_\alpha)\cap \B(u(w_\beta),\xi_\beta)=\emptyset$. Indeed, assuming without loss of generality that $\ell_\alpha \geq \ell_\beta$, we have $\big|u(w_\beta)-u(w_\alpha) \big|\geq \ell_\alpha/L$. And as a consequence, $\xi_\alpha+\xi_\beta\leq \ell_\alpha/(3L) + \ell_\beta/(3L)\leq 2\ell_\alpha/(3L)<\big|u(w_\beta)-u(w_\alpha) \big|$.
\end{remark}

\begin{lemma}\label{lem:curves}
For every segment $pq\subset\subset\Omega$ and every $\delta>0$, there exists a function $\upqd:pq\longrightarrow \Delta$ which is a $4L$ bi-Lipschitz interpolation of $u$ with the property that $\|\upqd-u\|_{L^{\infty}(pq)}\leq\delta$.
\end{lemma}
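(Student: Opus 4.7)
The plan is to parametrize $pq$ as $\gamma:[0,\ell]\to\R^2$ with $\gamma(t)=u(p+t(q-p)/\ell)$ and $\ell=|q-p|$, so that $\gamma$ is $L$ bi-Lipschitz, and to choose the interpolation nodes from a set of ``good'' points at which the linearization of $\gamma$ is uniformly controlled. Since $\gamma$ is Lipschitz on a 1D interval, Rademacher's theorem gives differentiability a.e., and the bi-Lipschitz lower bound forces $|\gamma'(t)|\geq 1/L$ at every differentiability point. Fix $\nu=1/(32L)$. By Lebesgue's differentiation theorem combined with Egoroff's theorem, there exist $h^\ast>0$ and a measurable set $G\subseteq[0,\ell]$ with arbitrarily small complement such that
\[
|\gamma(t+\tau)-\gamma(t)-\gamma'(t)\tau|\leq\nu|\tau|\qquad\forall\,t\in G,\ |\tau|\leq h^\ast,\ t+\tau\in[0,\ell].
\]
Choosing $h^\ast$ small enough that $2\nu h^\ast\leq\delta$ and that the complement of $G$ has measure much less than $h^\ast$, I take subdivision points $0=t_0<t_1<\cdots<t_N=\ell$ with $t_1,\dots,t_{N-1}\in G$ and spacings in a bounded ratio, $t_{i+1}-t_i\in[h^\ast/6,h^\ast]$. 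Setting $z_i=p+t_i(q-p)/\ell$, I take $\upqd$ to be the interpolation of $u$ at $\{z_i\}$ in the sense of Definition~\ref{def:interp}.

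Denoting the $i$-th secant by $v_i=(\gamma(t_{i+1})-\gamma(t_i))/(t_{i+1}-t_i)$, applying the uniform linearization at $t_i\in G$ and at $t_{i+1}\in G$ yields $|v_i-\gamma'(t_i)|\leq\nu$ and $|v_i-\gamma'(t_{i+1})|\leq\nu$, whence $|v_i-v_{i+1}|\leq 2\nu$ for all interior $i$; the boundary pieces are handled analogously using the linearization around $t_1$ and $t_{N-1}$. The $L^\infty$ estimate follows at once: on each piece, $|\upqd(z)-u(z)|$ equals $|(t-t_i)(v_i-\gamma'(t_i))-\rho|$ with $|\rho|\leq\nu(t-t_i)$, whence $\|\upqd-u\|_{L^\infty(pq)}\leq 2\nu h^\ast\leq\delta$. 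The upper Lipschitz bound is immediate from $|v_i|\leq L+\nu<2L$.

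For the bi-Lipschitz lower bound, I split into three cases according to the relative position of $s,t\in pq$. \emph{Same piece:} $|\upqd(t)-\upqd(s)|=|v_i||t-s|\geq(1/L-\nu)|t-s|>|t-s|/(4L)$. \emph{Adjacent pieces:} writing $\upqd(t)-\upqd(s)=av_i+bv_{i+1}$ with $a+b=|t-s|$, the triangle inequality together with $|v_i-v_{i+1}|\leq 2\nu$ gives
\[
|\upqd(t)-\upqd(s)|\geq(a+b)|v_i|-b|v_i-v_{i+1}|\geq(a+b)(1/L-3\nu)>|t-s|/(4L).
\]
\emph{Non-adjacent pieces:} in this case $|t-s|$ is at least one spacing, so $|t-s|\geq h^\ast/6$ and hence $h^\ast\leq 6|t-s|$; combining the bi-Lipschitzness of $u$ with the $L^\infty$ estimate,
\[
|\upqd(t)-\upqd(s)|\geq|u(t)-u(s)|-2\|\upqd-u\|_\infty\geq\tfrac{|t-s|}{L}-4\nu h^\ast\geq|t-s|\bigl(\tfrac{1}{L}-24\nu\bigr)\geq\tfrac{|t-s|}{4L},
\]
where the last inequality uses $\nu=1/(32L)$.

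The main obstacle is the non-adjacent case: a purely local argument based on the closeness of consecutive secants would accumulate an error growing linearly in the number of pieces separating $s$ from $t$, and so could not be closed uniformly. The crucial observation is that non-adjacent pieces automatically force $|t-s|$ to be at least one spacing, which is large on the scale of the $L^\infty$ error $2\nu h^\ast$; this allows the global bi-Lipschitzness of $u$ to propagate directly to $\upqd$ via $L^\infty$-proximity. The three cases match up thanks to the bounded spacing ratio and the calibration $\nu=1/(32L)$, producing the claimed constant $4L$. Finally, since $pq\subset\subset\Omega$ implies $u(pq)\subset\subset\Delta$, for $\delta$ small enough the image of $\upqd$ lies in $\Delta$, as required.
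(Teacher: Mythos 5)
The approach is attractive, but there is a gap in the node-placement step, and it is load-bearing for the rest of the argument. Egoroff's theorem gives, for each target $\mu>0$, a set $G$ with $|[0,\ell]\setminus G|<\mu$ together with a scale $h^*=h^*(\mu)>0$, but it imposes no lower bound on $h^*(\mu)$ in terms of $\mu$. To select interior nodes $t_1,\dots,t_{N-1}\in G$ with all spacings in $[h^*/6,h^*]$ you need every window $[t_i+h^*/6,\,t_i+h^*]$ (of length $5h^*/6$) to meet $G$, hence $|G^c|<5h^*/6$, i.e.\ $\mu<5h^*(\mu)/6$. The two requirements you place on $h^*$ pull in opposite directions: $2\nu h^*\leq\delta$ wants $h^*$ small, while $|G^c|<5h^*/6$ wants $h^*$ large; and once $\mu$ is chosen, $G$ and $h^*$ are fixed, so one cannot simply ``choose $h^*$ small enough'' to satisfy both. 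The failure is not hypothetical: for the bi-Lipschitz curve $\gamma(t)=(t,\,t^2\sin(1/t))$ the linearization at $t$ with precision $\nu$ holds only up to scale $h(t)\sim\nu t^2$, so the best $G$ with $|G^c|<\mu$ is essentially $\{t\geq\mu\}$ and the corresponding Egoroff scale is $h^*\sim\nu\mu^2\ll\mu$; a bounded-ratio subdivision with interior nodes in $G$ simply does not exist at small scales. Since your non-adjacent case hinges precisely on the spacing lower bound (to make the $L^\infty$ error small relative to $|t-s|$), the lower bi-Lipschitz bound for $\upqd$ cannot be closed by this route.

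The paper sidesteps the calibration problem entirely by choosing nodes from the \emph{image}: $z_{i+1}$ is the farthest point on $pq$ with $|u(z_i)-u(\cdot)|\leq\rho$, so consecutive image increments have length exactly $\rho$. The maximality of each $z_{i+1}$ then forces the angle at $u(z_{i+1})$ between consecutive image segments to be at least $\pi/3$ (giving the adjacent case by a triangle estimate), and forces $u(z_l)\notin\B(u(z_i),\rho)\cup\B(u(z_{i+1}),\rho)$ for $l>i+1$ (giving the non-adjacent case by a two-ball separation argument). No uniform differentiability is invoked, and the comparability of spacings (which here holds automatically with ratio $L^2$) is never weighed against an exterior Egoroff scale. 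Your same-piece and adjacent-piece computations and the $L^\infty$ estimate are correct conditional on the subdivision existing; to rescue the approach you would need a variable-scale covering (e.g.\ Vitali, with intervals of radius $\sim h(t)$), but then neighboring pieces can have incomparable lengths and the non-adjacent case would require a genuinely new argument.
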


\begin{lemma}\label{lem:cross}
There exists a sequence $\{\xi_\alpha\}_{\alpha\in\N}$ such that the associated adjusted function $\uad:\Q\longrightarrow \R^2$ is $18L$ bi-Lipschitz and $\uad(\Q)\subseteq \Delta$.
\end{lemma}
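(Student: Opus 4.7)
The plan is to choose the sequence $\{\xi_\alpha\}$ small enough to satisfy several simultaneous conditions, and then verify both conclusions by a case analysis on pairs of points $p,q\in\Q$.

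I would first impose the basic requirements on $\xi_\alpha$: (i) $3L\xi_\alpha\leq\ell_\alpha$, which is assumed in Definition~\ref{eq:valpha} and guarantees via Remark~\ref{rightafter} that $\xi_\alpha^i\leq 1/3$ and that the balls $\B(u(w_\alpha),\xi_\alpha)$ are pairwise disjoint; (ii) $\xi_\alpha<d(u(w_\alpha),\partial\Delta)$, so that the straight segments from $u(w_\alpha)$ along each arm stay inside $\Delta$. The inclusion $\uad(\Q)\subseteq\Delta$ follows immediately: on $\Q\setminus\Q'$, $\uad=v_\eps$ takes values in $\Delta_\eps\subseteq\Delta$; on the middle parts of $\Q'$-sides, $\uad=u\in\Delta$; and on each cross arm, $\uad$ traces a straight segment of length at most $\xi_\alpha$ from $u(w_\alpha)$, contained in $\Delta$ by (ii).

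For the upper Lipschitz bound, I would note that on each cross arm $\uad$ is an affine rescaling whose factor $\xi_\alpha/(\xi_\alpha^i|w_\alpha^i-w_\alpha|)$ lies in $[1/L,L]$ by the bi-Lipschitz property of $u$ (or $v_\eps$); on middle subsegments $\uad=u$ is $L$-Lipschitz; on $\Q\setminus\Q'$, $\uad=v_\eps$ is $(L+\eps)$-bi-Lipschitz by Proposition~\ref{prop:lebappr}. Consecutive pieces glue continuously along the straight edges of $\Q$, giving an upper Lipschitz constant of order $L$.

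The lower bi-Lipschitz estimate is proved by case analysis. Points in a single affine piece are trivial. Two points on the same edge of $\Q'$ in different pieces are handled by noting that the two cross-arm images lie in the disjoint balls $\B(u(w_\alpha),\xi_\alpha)$ and $\B(u(w_\beta),\xi_\beta)$, while the middle image is an arc of $u$ joining them, so the three pieces cannot collapse. Two points on distinct edges sharing no common vertex are separated by a distance controlled by the bi-Lipschitz property of $u$ (or $v_\eps$) minus an $O(\xi_\alpha)$ correction from the arm adjustments, which is absorbed by choosing $\xi_\alpha$ sufficiently small.

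The main obstacle is the case where $p$ and $q$ lie on distinct arms of a common cross $Z_\alpha$, or on an arm and the adjacent middle part of a different edge meeting $w_\alpha$. Writing $\uad(p)-u(w_\alpha)=\tau a_i$ (where $|a_i|=\xi_\alpha$) and $\uad(q)-u(w_\alpha)=\sigma a_j$, the bi-Lipschitz estimate reduces to a quantitative lower bound on the angle between the chord directions $a_i$ and $a_j$ (or between $a_i$ and the direction of the $u$-arc emanating from $w_\alpha$). The key input is the bi-Lipschitz property of $u$: the two arm-endpoints $p_i,p_j$ in the source are at distance $\geq c\xi_\alpha/L$ in the square grid (the constant depending on whether the arms are orthogonal or antipodal), hence $|a_i-a_j|\geq c\xi_\alpha/L^2$, and the law of cosines gives the desired angle bound. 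Combined with the rescaling factors in $[1/L,L]$ and a careful bookkeeping across the various sub-cases, this produces the claimed $18L$ bi-Lipschitz constant. This is the critical step, and requires the $\xi_\alpha$ to be chosen small enough to absorb all lower-order geometric corrections uniformly over the locally finite grid.
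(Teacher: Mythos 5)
Your overall structure (choose $\xi_\alpha$, then case analysis on pairs of points, treating the cross-regions as the hard case) matches the paper's, but two concrete steps don't work as you describe.

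\textbf{First, the $\xi_\alpha$ cannot all be made arbitrarily small.} Your strategy is to "absorb lower-order corrections by choosing $\xi_\alpha$ small enough," but this runs in the wrong direction at the vertices $w_\alpha\in\partial\Omega_\eps$, where some arms of the cross lie in $\Q\setminus\Q'$ (so $\uad=v_\eps$ there) and others in $\Q'$ (so $\uad$ is built from $u$). At such a vertex the key quantity to control is $|u(p_i)-v_\eps(p_j)|$ for arm-endpoints $p_i$ (under $u$) and $p_j$ (under $v_\eps$); the error $\|u-v_\eps\|_{L^\infty(\Omega_\eps)}\leq \sqrt{2}r/(6L^3)$ from~\eqref{luiro} must be dominated by $|p_i-p_j|\gtrsim \xi_\alpha/L$. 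Shrinking $\xi_\alpha$ makes $|p_i-p_j|$ smaller while leaving the $u$--$v_\eps$ mismatch fixed, so the estimate fails. The paper keeps $\xi_\alpha=\ell_\alpha/(3L)=r/(3L)$ at boundary vertices precisely for this reason; only at interior vertices, where $\uad$ is built from $u$ alone, is it legitimate to shrink $\xi_\alpha$ to get $u(w_\alpha)u(p_i)\subset\subset\Delta$.

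\textbf{Second, the law-of-cosines angle bound is too lossy to yield $18L$.} From $|a_i-a_j|\gtrsim\xi_\alpha/L^2$ and $|a_i|=|a_j|=\xi_\alpha$ you obtain an angle $\theta\gtrsim 1/L^2$ between the two chord directions. Following this through for two points $z,z'$ on distinct arms at comparable radii $d\approx d'$, the best you get is $|\uad(z)-\uad(z')|\gtrsim d\,\theta\gtrsim d/L^2$ against $|z-z'|\lesssim Ld$, i.e.\ a lower Lipschitz bound of order $L^{-3}$, not $L^{-1}$. The paper avoids this loss by exploiting the radial-similarity structure of the cross map: for $z$ on arm $i$ it introduces the point $z''$ on arm $j$ at the \emph{same fraction} along the arm, for which one has the exact identity
\[
\frac{|\uad(z)-\uad(z'')|}{|z-z''|}=\frac{|\uad(p_i)-\uad(p_j)|}{|p_i-p_j|}\geq\frac{1}{2L}
\]
(the right-hand side being the estimate~\eqref{psp}, checked separately in the three sub-cases of which sides lie in $\Q'$), and then uses the isoceles triangle $\uad(w_\alpha)\uad(z)\uad(z'')$ to get $\angle\uad(z)\uad(z'')\uad(z')\geq\pi/2$. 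This gives the $O(L)$ bound directly. Your angle-based version would prove a correct but strictly weaker statement with a bi-Lipschitz constant of order $L^3$, which would in turn degrade $C_1L^4$ in Proposition~\ref{prop:lipext} and the exponents in Theorems~\ref{main} and~\ref{mainaffine}.

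Apart from these, your treatment of the inclusion $\uad(\Q)\subseteq\Delta$ and the upper Lipschitz bound by gluing affine pieces is essentially what the paper does.
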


Before giving the proof of Lemmas~\ref{lem:curves} and~\ref{lem:cross}, we show how they enter into the proof of Proposition~\ref{prop:lipext}.

\begin{proof}[Proof of Proposition~\ref{prop:lipext}: ]
To define the searched function $\tilde v_\eps: \Omega\setminus\Omega_\eps\longrightarrow \Delta\setminus\Delta_\eps$, let first $\uad:\Q\longrightarrow\R^2$ be an adjusted function according with Lemma~\ref{lem:cross}, corresponding to the sequence $\{\xi_\alpha\}$. Our strategy will be first to define a suitable piecewise affine and injective function $\uad':\Q\longrightarrow\Delta$, coinciding with $\uad$ near the vertices $w_\alpha$, and then to obtain $\tilde v_\eps$ extending $\uad'$ in the interior of each square making use of Theorem~\ref{teo:bilsquare}. We divide the proof in some steps.

\step{I}{Definition of $\uad':\Q\longrightarrow\Delta$.}
First of all, we define $\uad'=\uad=v_\eps$ on $\Q\setminus \Q'$. Then, we consider a generic side $w_\alpha w_\beta\subseteq \Q'$ and define $pq$ the \emph{internal segment} of the side $w_\alpha w_\beta$, that is, $p$ and $q$ are the extremes of the segment $w_\alpha w_\beta\setminus \big( Z_\alpha \cup Z_\beta\big)$. Taking now a small constant $\delta=\delta(\alpha,\,\beta)$, to be fixed later, we set $\uad' = \uad$ on $w_\alpha w_\beta \cap \big(Z_\alpha \cup Z_\beta\big)$, and $\uad'=\upqd$ on $pq$, where $\upqd$ is given by Lemma~\ref{lem:curves}.\par

By definition, it is clear that $\uad'$ is a continuous, countably piecewise affine function on $\Q$. Moreover, since the different constants $\delta(\alpha,\,\beta)$ can be chosen arbitrarily small, each one independently of the others, and any internal segment $pq$ is compactly supported in $\Omega$, one can clearly assume that $\uad'(\Q) \subseteq \Delta$. In addition, since $\uad'$ is obtained gluing the $4L$ bi-Lipschitz functions $\upqd$ and the $18L$ bi-Lipschitz function $\uad$, we clearly have that $\uad'$ is $18\sqrt 2L$-Lipschitz (but, {\it a priori}, not bi-Lipschitz and not even injective!). To conclude the proof, we will then show that in fact $\uad'$ is bi-Lipschitz (hence, in particular, injective), and eventually we will extend $\uad'$ to the interior of the squares of the tiling (hence, to the whole $\Omega\setminus\Omega_\eps$).\par

Let us then fix two points $z,\, z' \in \Q$. In the next Steps~II--IV we will show that, provided that the constants $\delta(\alpha,\,\beta)$ are chosen small enough,
\begin{equation}\label{bilpr}
\big|\uad'(z)-\uad'(z')\big| \geq \frac 1{72L} |z-z'|\,.
\end{equation}
We consider separately the different possible reciprocal positions of $z$ and $z'$.

\step{II}{The case in which $z\in pq\subseteq w_\alpha w_\beta,\, z'\notin w_\alpha w_\beta$\,.}

In this case, as observed in Remark~\ref{rightafter}, we know that $|z-z'|\geq \xi_\alpha/L$. Thus, there are two subcases. If $z'$ does not belong to any internal segment (hence, either $z'$ belongs to some cross, or $z'\in \Omega_\eps$), then $\uad'(z')=\uad(z')$ and then by Lemma~\ref{lem:cross}, provided that we choose
\begin{equation}\label{bounddelta}
\delta(\alpha,\,\beta) \leq \frac{\min\{\xi_\alpha,\,\xi_\beta\}}{36 L^2}\,,
\end{equation}
we have
\[\begin{split}
\big|\uad'(z)-\uad'(z')\big| &= \big|\upqd(z)-\uad(z')\big|
\geq  \big|\uad(z)-\uad(z')\big| - \big|\upqd(z)-\uad(z)\big|\\
&=  \big|\uad(z)-\uad(z')\big| - \big|\upqd(z)-u(z)\big|
\geq  \frac{1}{18L}\,|z-z'\big| - \delta(\alpha,\,\beta)\\
&\geq  \frac{1}{18L}\,|z-z'\big| - \frac{\xi_\alpha}{36 L^2}
\geq  \frac{1}{36L}\,|z-z'\big|\,,
\end{split}\]
so that~(\ref{bilpr}) is proved.\par
Consider now the other subcase, namely, when $z'$ belongs to some other internal segment $p'q'\subseteq w_{\alpha'}w_{\beta'}$. In that case, since by construction and~(\ref{bounddelta}) it is $|z-z'|\geq 36 L\delta(\alpha,\,\beta)$ and $|z-z'|\geq 36 L\delta(\alpha',\,\beta')$, one directly has
\[\begin{split}
\big|\uad'(z)-\uad'(z')\big| &= \big| \upqd(z)  - u^{\delta'}_{p'q'}(z')\big|
\geq \big| u(z) - u(z') \big| - \big| u(z) - \upqd(z) \big| - \big| u(z') - u^{\delta'}_{p'q'}(z') \big| \\
&\geq \frac 1L\, | z -z' \big| - \delta(\alpha,\,\beta) - \delta(\alpha',\,\beta')
\geq \frac {17}{18L} |z-z'|\,,
\end{split}\]
hence again~(\ref{bilpr}) is established.

\step{III}{The case in which $z\in pq \subseteq w_\alpha w_\beta,\, z'\in w_\alpha w_\beta$\,.}

The second case is when $z$ still belongs to an internal segment $pq$ contained in the side $w_\alpha w_\beta\subseteq \Q'$, and also $z'$ belongs to the side $w_\alpha w_\beta$. In particular, if also $z'$ is in the internal segment $pq$ then we already know the validity of~(\ref{bilpr}) because $\uad'(z)=\upqd(z)$ and $\uad'(z')=\upqd(z')$, while $\upqd$ is $4L$ bi-Lipschitz. Therefore, we can directly assume that $z' \in w_\alpha p$, being the case $z' \in q w_\beta$ clearly the same.\par

By Definition~\ref{eq:valpha} we know that $\uad'(z')=\uad(z')$ lies in the segment $u(w_\alpha) u(p)$, which is a radius of the ball $\B\big(u(w_\alpha),\xi_\alpha\big)$. Hence, for any point $s$ outside the same ball, a trivial geometric argument tells us that
\begin{equation}\label{uffi}
\big|s - \uad(z')\big| \geq \frac{\big|s - u(p)\big| + \big|u(p) - \uad(z')\big|}3\,.
\end{equation}
Notice now that it is not true, in general, that $\uad'(z)=\upqd(z)$ lies outside the ball $\B\big(u(w_\alpha),\xi_\alpha\big)$. However, recalling that $\upqd$ is an interpolation of $u$, by Definition~\ref{def:interp} we know that $\upqd(z)$ is in a segment whose both extremes are out of the ball. Thus, if $\big|\upqd(z)-u(w_\alpha)\big| <\xi_\alpha$, up to possibly decreasing $\delta(\alpha,\,\beta)$ it is surely true that
\[
\xi_\alpha - \big|\upqd(z)-u(w_\alpha)\big| \ll \big|\upqd(z) - u(p)\big| \,.
\]
Putting this observation together with~(\ref{uffi}) we readily obtain that
\[\begin{split}
\big|\upqd(z) - \uad(z')\big| &\geq \frac{\big|\upqd(z) - u(p)\big| + \big|u(p) - \uad(z')\big|}4\\
&= \frac{\big|\upqd(z) - \upqd(p)\big| + \big|\uad(p) - \uad(z')\big|}4\,,
\end{split}\]
recalling that $\uad(p)=\upqd(p)=u(p)$ (of course, by selecting $\delta(\alpha,\beta)$ small enough, we could have used any number greater than $3$, instead of $4$, in the above estimate). Therefore, since $\upqd$ is $4L$ bi-Lipschitz while $\uad$ is $18L$ bi-Lipschitz, we readily obtain
\[\begin{split}
\big|\uad'(z)-\uad'(z')\big| &= \big| \upqd(z)  - \uad(z')\big|
\geq \frac{\big|\upqd(z) - \upqd(p)\big|}4 + \frac{\big|\uad(p) - \uad(z')\big|}4\\
&\geq \frac{|z -p\big|}{16L} + \frac{|p -z'|}{72L}
\geq \frac{|z -z'|}{72L}\,,
\end{split}\]
recalling that $z,\,p$ and $z'$ are aligned. Hence, (\ref{bilpr}) is checked once again also in this case.

\step{IV}{The case in which neither $z$ nor $z'$ are in some internal segment.}

Thanks to Step~II and Step~III and by the symmetry of the inequality~(\ref{bilpr}), we are left to consider only the situation where no one between $z$ and $z'$ is inside some internal segment. In other words, both $z$ and $z'$ must be either in $\Q\setminus\Q'$ or in some cross. By the definition of $\uad'$, this means that $\uad'(z)=\uad(z)$ and $\uad'(z')=\uad(z')$. And thus, since $\uad$ is $18L$ bi-Lipschitz thanks to Lemma~\ref{lem:cross}, the validity of~(\ref{bilpr}) is already known. Summarizing, we have shown the validity of~(\ref{bilpr}) in any possible case, and this means that the function $\uad':\Q\longrightarrow\Delta$ is injective and $72L$ bi-Lipschitz.

\step{V}{Conclusion.}

We have now to define the piecewise affine and bi-Lipschitz function $\tilde v_\eps:\Omega\setminus\Omega_\eps\longrightarrow \Delta\setminus\Delta_\eps$, matching $u$ on $\partial\Omega$ and matching $v_\eps$ on $\partial\Omega_\eps$. To do so, consider each square $\D_j$ of the tiling contained in $\Omega\setminus\Omega_\eps$. The function $\uad'$ is $72L$ bi-Lipschitz from $\partial\D_j$ to a subset of $\Delta$, then by Theorem~\ref{teo:bilsquare} it can be continuously extended to a piecewise affine bi-Lipschitz function of the whole square $\D_j$, with bi-Lipschitz constant $C_3 72^4 L^4$. Define $\tilde v_\eps$ as the countably piecewise affine function on $\Omega\setminus\Omega_\eps$ which gathers all these extensions on all the squares $\D_j\subseteq \Omega\setminus\Omega_\eps$ of the tiling.\par
For each square $\D_j\subseteq\Omega\setminus\Omega_\eps$, we clearly have that $\partial\big(\tilde v_\eps(\D_j)\big)= \uad'(\partial\D_j)$. This yields that $\tilde v_\eps$ is injective. Moreover, by continuity it is clear that $\tilde v_\eps =u$ on $\partial \Omega$, and by construction $\tilde v_\eps = v_\eps$ on $\partial \Omega_\eps$. As a consequence, $\tilde v_\eps:\Omega\setminus\Omega_\eps\longrightarrow \Delta\setminus\Delta_\eps$ fulfills all our requirements. In particular, one has $C_1=72^4 C_3$.
\end{proof}

Let us now make a simple observation, which will be useful in the sequel.

\begin{remark}\label{remfin}
Assume that $\Omega$ is a right polygon of side-length $\bar r$ and that $u$ is $\bar r$-piecewise affine on $\partial\Omega$, according to Definition~\ref{lastdef}. Then, consider the $r$-polygon $\Omega_\eps$ given by Proposition~\ref{prop:lebappr}. By the construction of Section~\ref{sect:1}, it is not restrictive to assume that $\bar r \in r\N$, and that $\Omega_\eps$ is a subset of the $r$-tiling of $\Omega$. Therefore, we can repeat \emph{verbatim} the construction of Proposition~\ref{prop:lipext} using, as tiling, the $r$-tiling of $\Omega$. Notice that in this case assumption~(\ref{second}) is not valid --see the remark right after~(\ref{second})-- but in fact if $\Omega$ is an $\bar r$-polygon, and $u$ is $\bar r$-piecewise affine on $\partial\Omega$, there is no need for the tiling to use smaller and smaller squares at the boundary. As a consequence, the bi-Lipschitz approximation $\tilde v_\eps$ provided by Proposition~\ref{prop:lipext} is (finitely) piecewise affine instead of countably piecewise affine. Observe that the assumption that $u$ is $\bar r$-piecewise affine on $\partial\Omega$ is essential, because otherwise the map $\tilde v_\eps$ would not coincide with $u$ on $\partial\Omega$.
\end{remark}

To conclude the proof of Theorem~\ref{main}, we then only need to give the proofs of Lemma~\ref{lem:curves} and of Lemma~\ref{lem:cross}.

\begin{proof}[Proof of Lemma~\ref{lem:curves}: ]
Let $\rho>0$ be a small number, to be fixed later. Define then $t_0=0$, $z_0=p$ and then recursively
\begin{align*}
t_{i+1} := \max \Big\{ 1\geq t>t_i:\, \big| u(z_i)- u\big( p + t(q-p)\big)\big|\leq \rho\Big\}\,, &&
z_{i+1} := p + t_{i+1}(q-p)\,.
\end{align*}
In this way, we have selected a finite sequence of points $z_0= p,\, z_1,\, \dots z_N=q$ in the segment $pq$, where $N=N(p,q,\rho)$. We can then already define the function $\upqd$ by setting, for any $0\leq i \leq N-1$ and any $0\leq t \leq 1$,
\[
\upqd\Big(z_{i}+t\big(z_{i+1}-z_i\big)\Big)=u(z_{i})+t\big(u(z_{i+1})-u(z_i)\big)\,.
\]
Hence, $\upqd$ is the interpolation of $u$ associated with the points $\{z_i\}$, according with Definition~\ref{def:interp}. The function $\upqd$ is by construction finitely piecewise affine and $L$-Lipschitz. By the uniform continuity of $u$ in $pq$ it is also clear that the bound $\|u-\upqd\|_{L^\infty(pq)}\leq \delta$ holds true as soon as $\rho$ is small enough. To conclude, we have thus only to check that
\begin{equation}\label{caro21}
\big|\upqd(z) - \upqd(z') \big| \geq \frac{1}{4L} |z-z'|
\end{equation}
for all $z,\, z'$ in $pq$. If both $z$ and $z'$ belong to the same segment $z_i z_{i+1}$, then the estimate is true because $\upqd$ is affine on that segment and $u$ is $L$ bi-Lipschitz.\par

Assume then that $z\in z_i z_{i+1}$ and $z'\in z_j z_{j+1}$ with $j>i$. If $j=i+1$, that is, $z$ and $z'$ belong to two consecutive segments, then by the definition of the points $z_i$ the angle $\angle{\upqd(z)}{\upqd(z_{i+1})}{\upqd(z')}$ is at least $\pi/3$. Hence,
\[\begin{split}
\big|\upqd(z) - \upqd(z') \big| &\geq \frac{\big|\upqd(z) - \upqd(z_{i+1}) \big|}2  + \frac{\big|\upqd(z_{i+1}) - \upqd(z') \big|}{2}\\
&=|z-z_{i+1}|\,\frac{\big|u(z_i) -u(z_{i+1})|}{2|z_i - z_{i+1}|}+|z_{i+1}-z'|\,\frac{\big|u(z_{i+1}) -u(z_{i+2})|}{2|z_{i+1} - z_{i+2}|}
\geq \frac{|z-z'|}{2L}\,,
\end{split}\]
so that~(\ref{caro21}) is checked.\par
Instead, let us see what happens if $j>i+1$. In this case, since $\upqd(z)\in u(z_i) u(z_{i+1})$ and for all $l>i+1$ one has $u(z_l)\notin \B(u(z_i),\rho)\cup \B(u(z_{i+1}),\rho)$, an immediate geometric argument ensures that $|\upqd(z)-\upqd(z')|\geq \sqrt{3}\rho/2$. As a consequence, we have
\[
\big|u(z_i) - u(z_{j+1}) \big| \leq \big|\upqd(z) -\upqd(z') \big| + 2 \rho
\leq \bigg(1+\frac 43 \,\sqrt{3} \bigg) \big|\upqd(z) -\upqd(z') \big|
\leq 4 \big|\upqd(z) -\upqd(z') \big|\,,
\]
which yields
\[
\big|\upqd(z) -\upqd(z') \big| \geq \frac{\big|u(z_i) - u(z_{j+1}) \big|}4
\geq \frac{\big|z_i - z_{j+1}\big|}{4L}
\geq \frac{\big|z - z'\big|}{4L}\,,
\]
hence~(\ref{caro21}) holds true also in this case and we conclude the proof.
\end{proof}

\begin{proof}[Proof of Lemma~\ref{lem:cross}:  ]
Let us take a vertex $w_\alpha$ of the grid $\Q'$. Take then a constant $\xi_\alpha\leq \ell_\alpha/(3L)$, with $\xi_\alpha = \ell_\alpha/(3L)=r/(3L)$ if $w_\alpha\in\partial\Omega_\eps$, while if $w_\alpha\notin\partial\Omega_\eps$ the inequality can be strict. In particular, it is admissible to ask that for any $\alpha$ one has
\begin{equation}\label{inpa}
\xi_\alpha < \frac{r}{2L}\,.
\end{equation}
Define now $\xi_\alpha^i$ as in Definition~\ref{eq:valpha} and, for any $1\leq i \leq \bar i(\alpha)$, let $p_i=w_\alpha + \xi_\alpha^i\big(w_\alpha^i-w_\alpha\big)$. If $w_\alpha\in\Omega\setminus\partial\Omega_\eps$, then we have
\begin{equation}\label{trfa}
u(w_\alpha) u(p_i)\subset\subset\Delta \qquad \forall\, 1\leq i\leq \bar i(\alpha)\,,
\end{equation}
up to possibly decrease the value of $\xi_\alpha$. Instead, if $w_\alpha\in\partial\Omega_\eps$, then~(\ref{trfa}) is already ensured by~(\ref{luiro}) and~(\ref{luiue}) in Proposition~\ref{prop:lebappr}, without any need of changing $\xi_\alpha$.\par
We introduce then the adjusted function $\uad$ of Definition~\ref{eq:valpha}: to obtain the thesis, we need to check that it fulfills the requirements of Lemma~\ref{lem:cross}. Thanks to~(\ref{trfa}), we already know that $\uad:\Q\longrightarrow \Delta$. Hence, all we have to do is to check that
\begin{equation}\label{awhtd}
\frac{|z-z'|}{18L} \leq \big|\uad(z) - \uad(z')\big| \leq 18L |z-z'|\,.
\end{equation}
for all $z,\, z' \in \Q$. We will do it in some steps.

\step{I}{For all $\alpha$, $\uad^{-1}\Big(\clos\B\big(u(w_\alpha),\xi_\alpha\big)\Big)=Z_\alpha$.}

We start observing an important property, that is, for any $\alpha$ and for any $z\in\Q$ we have that $\big|\uad(z)-u(w_\alpha)\big|\leq \xi_\alpha$ if and only if $z\in Z_\alpha$. In fact, if $z\in Z_\alpha$ then $z\in w_\alpha p_i$ for some $1\leq i \leq \bar i(\alpha)$, and since $\uad$ is affine in the segment $w_\alpha p_i$, while $\big|\uad(p_i)-u(w_\alpha)\big|=\xi_\alpha$, then of course $\big|\uad(z)-u(w_\alpha)\big|\leq \xi_\alpha$.\par
On the other hand, assume that $z\notin Z_\alpha$: we have to show that $\big|\uad(z)-u(w_\alpha)\big|>\xi_\alpha$. If $z\in w_\alpha w_\alpha^i$ for some $1\leq i \leq \bar i(\alpha)$, then there are three possibilities. First, if $w_\alpha w_\alpha^i\subset\Q\setminus\Q'$, then $\uad=v_\eps$ is affine on the side $w_\alpha w_\alpha^i$, so the claim is trivial. Second, if $w_\alpha w_\alpha^i \subset \Q'$ and $z$ belongs to the cross $Z_\beta$ associated to the vertex $w_\beta=w_\alpha^i$, then again the claim is immediate since $\uad(z)$ belongs to the ball $\B\big(u(w_\beta),\xi_\beta\big)$, which does not intersect $\B\big(u(w_\alpha),\xi_\alpha\big)$ by Remark~\ref{rightafter}. Lastly, if $w_\alpha w_\alpha^i\subset\Q'$ and $z\notin Z_\beta$, then $\uad(z)=u(z)$, thus the claim is again obvious by the definition of $\xi_\alpha^i$.\par
To conclude the step, we have to consider a point $z\notin Z_\alpha$ which does not belong to any side of $\Q$ starting at $w_\alpha$. We have again to distinguish some possible cases. If $z$ belongs to the cross $Z_\beta$ for some $\beta$, then again the claim follows by the fact that $\B\big(u(w_\alpha),\xi_\alpha\big) \cap \B\big(u(w_\beta),\xi_\beta\big)=\emptyset$. If $z$ does not belong to any cross and $z\in \Q'$, then $\uad(z)=u(z)$ so the claim follows because, using the bi-Lipschitz property of $u$ and the fact that $\xi_\alpha\leq \ell_\alpha/(3L)$, we have
\[
u(z) \in \B\big( u(w_\alpha), \xi_\alpha\big)\quad \Longrightarrow \quad \big|z - w_\alpha\big| \leq \frac{\ell_\alpha}{3}\,,
\]
which is impossible because $|z-w_\alpha|> \ell_\alpha$. Finally, consider the case when $z\in \Q\setminus \Q'$. In this case, we surely have $|z-w_\alpha|\geq r$ by construction, thus by~(\ref{luiro}) and~(\ref{inpa}) we get
\[\begin{split}
\big|\uad(z)-u(w_\alpha)\big| &= \big|v_\eps(z)-u(w_\alpha)\big|
\geq \big|u(z)-u(w_\alpha)\big| - \big|u(z) - v_\eps(z)\big|\\
&\geq \frac{\big| z - w_\alpha\big|}L - \frac{\sqrt{2} r}{6L^3}
\geq  \frac{r}{2L}
> \xi_\alpha\,,
\end{split}\]
thus the first step is concluded.\par

Now, taken two points $z,\, z'\in\Q$, we have to show the validity of~(\ref{awhtd}).

\step{II}{Validity of~(\ref{awhtd}) if $z,\,z'\in Z_\alpha$.}

Let us first suppose that both $z$ and $z'$ belong to the same cross $Z_\alpha$. By construction, $\uad$ is $L$ bi-Lipschitz on each segment $w_\alpha p_i$, hence to show~(\ref{awhtd}) we can assume without loss of generality that $z\in w_\alpha p_1$ and $z'\in w_\alpha p_2$. Therefore, on one side we have
\[\begin{split}
\big|\uad(z) - \uad(z')\big| &\leq \big|\uad(z) - \uad(w_\alpha)\big| + \big|\uad(w_\alpha) - \uad(z')\big|
\leq L \big( |z-w_\alpha| + |w_\alpha - z'|\big)\\
&\leq \sqrt{2} \,L\, |z-z'|\,.
\end{split}\]
On the other side, to estimate $\big|\uad(z) - \uad(z')\big|$ from below, assume without loss of generality that $\big|\uad(w_\alpha)-\uad(z)\big|\leq \big|\uad(w_\alpha)-\uad(z')\big|$, and define $z''\in w_\alpha z'$ so that
\[
\big|\uad(w_\alpha)-\uad(z)\big|=\big|\uad(w_\alpha)-\uad(z'')\big|\,.
\]
Since the triangle $\uad(w_\alpha)\uad(z)\uad(z'')$ is isosceles, then
\begin{equation}\label{ban}
\angle{\uad(z)}{\uad(z'')}{\uad(z')}\geq \frac \pi 2\,.
\end{equation}
Moreover, we claim that
\begin{equation}\label{psp}
\frac{\big|\uad(z) - \uad(z'')\big|}{\big| z - z''\big|}\geq \frac 1 {2L}\,.
\end{equation}
Indeed, if both $w_\alpha w_\alpha^1$ and $w_\alpha w_\alpha^2$ belong to $\Q'$, then by definition
\[
\frac{\big|\uad(z) - \uad(z'')\big|}{\big| z - z''\big|} = \frac{\big|\uad(p_1) - \uad(p_2)\big|}{\big| p_1 - p_2\big|}
=\frac{\big|u(p_1) - u(p_2)\big|}{\big| p_1 - p_2\big|}\geq \frac 1 L\,,
\]
so~(\ref{psp}) holds true. Conversely, if both $w_\alpha w_\alpha^1$ and $w_\alpha w_\alpha^2$ belong to $\Q\setminus\Q'$, then since $v_\eps$ is $L+\eps$ bi-Lipschitz we have
\[
\frac{\big|\uad(z) - \uad(z'')\big|}{\big| z - z''\big|} = \frac{\big|\uad(p_1) - \uad(p_2)\big|}{\big| p_1 - p_2\big|}
=\frac{\big|v_\eps(p_1) - v_\eps(p_2)\big|}{\big| p_1 - p_2\big|}\geq \frac 1 {L+\eps}\,,
\]
so again~(\ref{psp}) holds true. Finally, assume that $w_\alpha w_\alpha^1\subseteq \Q'$ while $w_\alpha w_\alpha^2\subseteq \Q\setminus \Q'$ (the case of $w_\alpha w_\alpha^1\subseteq \Q\setminus\Q'$ and $w_\alpha w_\alpha^2\subseteq \Q'$ being completely equivalent). In this case, it must clearly be $w_\alpha\in\partial\Omega_\eps$, hence by Remark~\ref{rightafter} we know that $|p_1-w_\alpha|$ and $|p_2-w_\alpha|$ are both at least $\xi_\alpha/L=r/(3L^2)$, thus $|p_1-p_2|\geq \sqrt{2} r/(3L^2)$. Therefore, recalling again~(\ref{luiro}), we have
\[\begin{split}
\frac{\big|\uad(z) - \uad(z'')\big|}{\big| z - z''\big|} &= \frac{\big|\uad(p_1) - \uad(p_2)\big|}{\big| p_1 - p_2\big|}
=\frac{\big|u(p_1) - v_\eps(p_2)\big|}{\big| p_1 - p_2\big|}\\
&\geq \frac{\big|u(p_1) - u(p_2)\big|}{\big| p_1 - p_2\big|} - \frac{\big|u(p_2) - v_\eps(p_2)\big|}{\big| p_1 - p_2\big|}
\geq \frac{1}{L}- \frac{\sqrt{2} r / (6L^3)}{\sqrt{2} r/(3L^2)}
=\frac 1{2L}\,,
\end{split}\]
thus~(\ref{psp}) has been finally checked in all the possible cases. This inequality, together with~(\ref{ban}) and again with the fact that $\uad$ is $L$ bi-Lipschitz on the segment $z' z'' \subseteq w_\alpha p_2$, yields
\[\begin{split}
\big|\uad(z) - \uad(z')\big| &\geq \frac{\sqrt{2}} 2\Big(\big|\uad(z) - \uad(z'')\big| + \big|\uad(z'') - \uad(z')\big|\Big)\\
&\geq \frac{\sqrt{2}}{2}\bigg( \frac{|z-z''|}{2L} + \frac{|z'' - z'|}{L}\bigg)
\geq \frac{\sqrt{2}}{4L} |z-z'|\,.
\end{split}\]
Summarizing, under the assumptions of this step
\begin{equation}\label{step2}
\frac{\sqrt{2}}{4L} |z-z'| \leq \big|\uad(z) - \uad(z')\big| \leq \sqrt{2} L |z-z'|\,.
\end{equation}
Therefore, (\ref{awhtd}) is shown and this step is concluded.

\step{III}{Validity of~(\ref{awhtd}) if for all $\alpha$ one has $z,\,z'\notin \mathrm{int} \, Z_\alpha$.}

Consider now the situation when neither $z$ nor $z'$ belong to the interior of any cross. In this case, we have that $\uad(z)=u(z)$ if $z\in \Q'$, while $\uad(z)=v_\eps(z)$ if $z\in\Q\setminus\Q'$, and the same holds for $z'$. Since $u$ is $L$ bi-Lipschitz while $v_\eps$ is $L+\eps$ bi-Lipschitz, the validity of~(\ref{awhtd}) is obvious if both $z,\,z'\in \Q'$, as well as if both $z,\,z'\in\Q\setminus\Q'$. Therefore, we can just concentrate on the case in which $z\in \Q',\, z'\in\Q\setminus\Q'$.\par
In this case, the main observation is that $|z-z'| \geq \sqrt{2} r /(3L^2)$, since both $z$ and $z'$ must be at distance at least $r/(3L^2)$ from any vertex $w_\alpha\in\partial\Omega_\eps$, because they do not belong to any cross $Z_\alpha$. As a consequence, again by~(\ref{luiro}) we get
\[\begin{split}
\big|\uad(z) - \uad(z')\big| &= \big|u(z) - v_\eps(z')\big|
\geq \big|u(z) - u(z')\big| - \big|u(z') - v_\eps(z')\big|\\
&\geq \frac{|z-z'|}{L} - \frac{\sqrt{2}r}{6L^3}
\geq \frac{|z-z'|}{2L}\,,
\end{split}\]
while
\[\begin{split}
\big|\uad(z) - \uad(z')\big| &= \big|u(z) - v_\eps(z')\big|
\leq \big|u(z) - u(z')\big| + \big|u(z') - v_\eps(z')\big|\\
&\leq L \, |z-z'|+\frac{\sqrt{2}r}{6L^3}
\leq \bigg( L+ \frac{1}{2L} \bigg)|z-z'|\,,
\end{split}\]
thus also in this case~(\ref{awhtd}) is proven (keep in mind that, since $u$ is a $L$ bi-Lipschitz map, then of course $L\geq 1$!). In particular, under the assumptions of this step one has
\begin{equation}\label{step3}
\frac{|z-z'|}{2L}\leq \big|\uad(z) - \uad(z')\big| \leq  \frac 32\, L\,|z-z'|\,.
\end{equation}

\step{IV}{Validity of~(\ref{awhtd}) if $z\in  Z_\alpha$ and for all $\beta$ one has $z'\notin \mathrm{int}\, Z_\beta$.}

We pass now to consider the case when $z$ belongs to some cross $Z_\alpha$, while $z'$ does not belong to the interior of any cross. In particular, we can assume that $z\in w_\alpha p_1$. To get the above estimate in~(\ref{awhtd}), it is enough to make a trivial geometric observation, namely, that there exists $1\leq i \leq \bar i(\alpha)$ such that
\[
|z-z'| \geq \frac{\sqrt{2}}2\Big( |z-p_i|+|p_i-z'|\Big)\,,
\]
not necessarily with $i=1$. As a consequence, we can use the estimate~(\ref{step2}) of Step~II for the points $z$ and $p_i$ --which both belong to $Z_\alpha$-- and the estimate~(\ref{step3}) of Step~III for the points $p_i$ and $z'$ --none of which belongs to the interior of some $Z_\beta$-- to get
\[\begin{split}
\big|\uad(z) - \uad(z')\big| &\leq \big|\uad(z) - \uad(p_i)\big| + \big|\uad(p_i) - \uad(z')\big|
\leq \sqrt{2} L |z-p_i| + \frac 32\, L |p_i - z'|\\
&\leq \frac 32\, \sqrt{2} \,L |z-z'|\,.
\end{split}\]
On the other hand, to get the below estimate in~(\ref{awhtd}), let us recall that by Step~I we have
\begin{align}\label{uffi2}
\uad(z) \in \clos \B\big(u(w_\alpha),\xi_\alpha\big)\,, &&
\uad(z') \notin \B\big(u(w_\alpha),\xi_\alpha\big)\,.
\end{align}
Since $\uad(z)$ belongs to the radius $u(w_\alpha)\uad(p_1)$, then an immediate geometric argument from~(\ref{uffi2}) implies, as already observed in~(\ref{uffi}), that
\begin{equation}\label{alla}
\big|\uad(z) - \uad(z')\big| \geq \frac{\big|\uad(z) - \uad(p_1)\big| + \big|\uad(p_1) - \uad(z')\big|}3\,.
\end{equation}
Thus, using the $L$ bi-Lipschitz property of $\uad$ in the segment $w_\alpha p_1$, and the estimate~(\ref{step3}) of Step~III for $p_1$ and $z'$, we get
\[
\big|\uad(z) - \uad(z')\big| \geq \frac{|z-p_1|}{3L} + \frac{|p_1-z'|}{6L} \geq \frac{|z-z'|}{6L}\,.
\]
Summarizing, under the assumptions of this step we have
\begin{equation}\label{step4}
\frac{|z-z'|}{6L} \leq \big|\uad(z) - \uad(z')\big| \leq \frac 32\, \sqrt{2} \,L |z-z'|\,,
\end{equation}
hence in particular~(\ref{awhtd}) is again checked.

\step{V}{Validity of~(\ref{awhtd}) if $z\in Z_\alpha$ and $z'\in Z_\beta$.}

The last situation which is left to consider is when $z$ and $z'$ belong to two different crosses. This situation will be very similar to that of Step~IV. Indeed, for the above estimate in~(\ref{awhtd}) we can again start observing that for some $1\leq i \leq \bar i (\alpha)$ it must be
\[
|z-z'| \geq \frac{\sqrt{2}}2\Big( |z-p_i|+|p_i-z'|\Big)\,.
\]
Then, we use the estimate~(\ref{step2}) of Step~II for the points $z,\, p_i\in Z_\alpha$, and the estimate~(\ref{step4}) of Step~IV for the points $z'\in Z_\beta$ and $p_i$ --which does not belong to the interior of any cross-- getting
\[\begin{split}
\big|\uad(z) - \uad(z')\big| &\leq \big|\uad(z) - \uad(p_i)\big| + \big|\uad(p_i) - \uad(z')\big|\\
&\leq \sqrt{2} L |z-p_i| + \frac 32\,\sqrt{2}\, L |p_i - z'|
\leq 3L |z-z'|\,.
\end{split}\]
Finally, to find the below estimate in~(\ref{awhtd}) we notice again that~(\ref{alla}) holds true, and we use the $L$ bi-Lipschitz property of $\uad$ in $w_\alpha p_1$ and the estimate~(\ref{step4}) of Step~IV for $p_1$ and $z'$, obtaining
\[
\big|\uad(z) - \uad(z')\big| \geq \frac{|z-p_1|}{3L} + \frac{|p_1-z'|}{18L} \geq \frac{|z-z'|}{18L}\,.
\]
Thus, we have finally checked~(\ref{awhtd}) in all the possible cases and the proof is concluded.
\end{proof}

\section{Finitely piecewise affine approximation on polygonal domains\label{sect:3}}

In this last section we give a proof of Theorem~\ref{mainaffine}. In fact, the proof is quite short, since it is just a simple adaptation of the arguments of Section~\ref{sect:2}.

\begin{proof}[Proof of Theorem~\ref{mainaffine}: ]

First of all, assume that $\Omega$ is an $\bar r$-right polygon and that $u$ is $\bar r$-piecewise affine on $\partial \Omega$ according to Definition~\ref{lastdef}. Then, as already underlined in Remark~\ref{remfin}, we can slightly modify the proofs of Proposition~\ref{prop:lebappr} and Proposition~\ref{prop:lipext} to get what follows. First of all, there exist some $r$ such that $\bar r \in r\N$, an $r$-right polygon $\Omega_\eps\subset\subset\Omega$ which is part of the $r$-tiling of $\Omega$, and an $L+\eps$ bi-Lipschitz and piecewise affine function $v_\eps:\Omega_\eps\longrightarrow \R^2$ for which~(\ref{luino}), (\ref{eq:lebappr3}), (\ref{luiue}) and~(\ref{luiro}) hold. Moreover, there exists also a finitely piecewise affine map $\tilde v_\eps:\Omega\setminus\Omega_\eps \longrightarrow \Delta\setminus\Delta_\eps$ which is $C_1L^4$ bi-Lipschitz and which coincides with $u$ on $\partial\Omega$ and with $v_\eps$ on $\partial\Omega_\eps$. Therefore, gluing $v_\eps$ and $\tilde v_\eps$ exactly as in the proof of Theorem~\ref{main}, we immediately get the required $C_1L^4$ bi-Lipschitz and (finitely) piecewise affine approximation of $u$.\par

Consider now the general situation of a polygon $\Omega$ with a map $u$ which is piecewise affine on $\partial\Omega$. Of course, there exist a right polygon $\widehat \Omega$ and a (finitely) piecewise affine and bi-Lipschitz map $\Phi:\Omega \longrightarrow \widehat\Omega$, having bi-Lipschitz constant $C=C(\Omega)$. The map $u\circ \Phi^{-1}$ is a $C L$ bi-Lipschitz map from the right polygon $\widehat \Omega$ to $\Delta$, which is piecewise affine on the boundary. Then, we can apply the first part of the proof to get an approximation $v:\widehat \Omega\longrightarrow\Delta$ which is finitely piecewise affine and $C_1 C^4 L^4$ bi-Lipschitz. Finally, $v\circ \Phi:\Omega\longrightarrow \Delta$ is a $C_1 C^5 L^4$ bi-Lipschitz approximation of $u$ as desired. Thus, the proof is concluded by setting $C'(\Omega)=C^5$.
\end{proof}

\begin{remark}\label{depC'}
Observe that the (best) constant $C'(\Omega)$ depends on the geometric features of $\Omega$, such as the minimum and the maximum angles of its boundary. However, by the construction above one has that $C'(\Omega)=1$ whenever $\Omega$ is a right polygon.
\end{remark}

\section*{Acknowledgments}

Both authors have been supported by the ERC Starting Grant n. 258685 ``Analysis of Optimal Sets and Optimal Constants: Old Questions and New Results'', and by the ERC Advanced Grant n. 226234 ``Analytic Techniques for Geometric and Functional Inequalities''.

\end{document}